\definecolor {processblue}{cmyk}{0.96,0,0,0}
\newtheorem{Theorem}{Theorem}[section]
\newtheorem{Proposition}{Proposition}[section]
\newtheorem{Lemma}{Lemma}[section]
\newtheorem{Example}{Example}[section]
\newtheorem{Remark}{Remark}[section]
\newcommand*{\rom}[1]{\expandafter\@slowromancap\romannumeral #1@}
\begin{document}
\title{xxxx}
\date{}
 \title{Multiplication on self-similar sets with overlaps}
\author{Li Tian, Jiangwen Gu, Qianqian, Ye, Lifeng Xi, and Kan Jiang\thanks{Corresponding author}}
\maketitle{}
\begin{abstract}
Let $A,B\subset\mathbb{R}$. Define $$A\cdot B=\{x\cdot y:x\in A, y\in B\}.$$
In this paper, we consider the following   class of self-similar sets with overlaps.
Let $K$ be the attractor of the IFS $\{f_1(x)=\lambda x, f_2(x)=\lambda x+c-\lambda,f_3(x)=\lambda x+1-\lambda\}$, where  $f_1(I)\cap f_2(I)\neq \emptyset, (f_1(I)\cup f_2(I))\cap f_3(I)=\emptyset,$
and  $I=[0,1]$ is the convex hull of $K$.
The main result of this paper is
$K\cdot K=[0,1]$ if and only if  $(1-\lambda)^2\leq c$. 
 Equivalently,   we give a  necessary  and sufficient condition  such that  for any $u\in[0,1]$, $u=x\cdot y$, where $x,y\in K$. 
\end{abstract}
\section{Introduction}
Given $A,B\subset\mathbb{R}$. Define $A*B=\{x*y:x\in A, y\in B\}$, where $*$ is  $+, -,\times$ or $\div$ (when $*=\div$, $y\neq 0$). The arithmetic sum  of two Cantor sets was studied by many scholars. There are many results concerning with this topic, see \cite {Yoccoz, Hochman,Dekking1,Dekking2,  Eroglu, SumKan}  and references therein.  It is an important problem in  homoclinic bifurcations \cite{Palis}.  The sum of two fractal sets  is similar to the projection of the product of these two sets through some angle \cite{Falconer}. Therefore,  one  can consider the sum of two fractal sets from the projection perspective \cite{Hochman, PS, Kan2019}. 
For the multiplication on  two  fractal sets, however, to the best of our knowledge, few papers  analyzed this  topic.   From the physical point of view, this problem  arises naturally in the study of the spectrum of the Labyrinth model \cite{Take}. 
In \cite{Tyson}, Athreya,  Reznick, and Tyson considered the multiplication and division on the middle-third Cantor sets.   They proved that $17/21\leq \mathcal{L}(C\cdot C)\leq 8/9$, where $\mathcal{L}$ denotes the Lebesgue measure and $C$ is the middle-third Cantor set. 
There are still many open questions. For instance,  if the middle-third Cantor set is replaced by the  overlapping self-similar sets \cite{Hutchinson}, then how can we obtain the sharp result, i.e. giving a necessary and sufficient condition such that  the multiplication of two overlapping self-similar sets is exactly some interval. This is  one of the main motivations of this paper. Another motivation of analyzing the multiplication  on self-similar sets is that we want to give a  new representation for any number in the unit interval, namely, given any $u\in[0,1]$, then how can we find $x,y$ in the same self-similar set such that  $u=x\cdot y$.

In this paper, we consider    the following  class of overlapping self-similar sets \cite{Hutchinson}. 
Let $K$ be the self-similar set of the  IFS $$\{f_1(x)=\lambda x, f_2(x)=\lambda x+c-\lambda,f_3(x)=\lambda x+1-\lambda, 0<\lambda<1\}.$$ 
We assume that $f_1(I)\cap f_2(I)\neq \emptyset, (f_1(I)\cup f_2(I))\cap f_3(I)=\emptyset,$
where $I=[0,1]$ is the convex hull of $K$. 
This class of self-similar set,  which  is indeed  a classical example  allowing  overlaps \cite{Hutchinson},  was investigated by many people. The celebrated conjecture posed by  Furstenberg 
states that  the self-similar set $$\Lambda=\dfrac{\Lambda}{3}\cup \dfrac{\Lambda+\gamma}{3}\cup\dfrac{\Lambda+2}{3} $$
has Hausdorff dimension $1$ for any irrational $\gamma. $  Hochman \cite{Hochman} proved  this conjecture is correct. 
Keyon \cite{Keyon},
 Rao and Wen \cite{Rao} studied the Hausdorff dimension of $\Lambda$ if $\gamma$ is rational. They proved that $\mathcal{H}^{1}(\Lambda)>0$ if and only if $\lambda=p/q\in \mathbb{Q}$ with $p\equiv q\not\equiv (0\equiv 3)$.  Ngai and Wang \cite{NW} came up with the finite type condtion, and gave an algorithm which can calculate the Haudorff dimension of $\Lambda$ when  $\Lambda$ is of finite type.  In \cite{DJKL, KarmaKan2, KarmaKan}, Dajani et al. analyzed the points in $\Lambda$ with multiple codings, and obtained that  when the overlaps are the  exact overlaps, then the set of points with  exactly $k$ codings has the same Hausdorff dimension as the univoque set. In \cite{Xi}, Guo et al. considered the bi-Lipschitz equivalence of overlapping self-similar sets when $\gamma$ differs. In \cite{JWX}, Jiang, Wang and Xi considered when the self-similar set $\Lambda$ is bi-Lipschitz equivalent to another self-similar set with  the strong separation condtion. All these results analyzed the overlapping self-similar sets from different aspects. 
 
 In this paper, we consider the multiplication on $K$. The assumptions on $K$ allow very compliciated overlaps.  We, however, have the following  result. 
\begin{Theorem}\label{Main}
Let $K$ be the self-similar set defined above. Then 
$$K\cdot K=[0,1] \mbox{ if and only if  } (1-\lambda)^2\leq c.$$
\end{Theorem}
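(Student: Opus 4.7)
\emph{Necessity.} Since $K=f_1(K)\cup f_2(K)\cup f_3(K)$ with $f_1(I)\cup f_2(I)=[0,c]$ and $f_3(I)=[1-\lambda,1]$ disjoint, one has $K\subset[0,c]\cup[1-\lambda,1]$. For $x,y\in K$, either $\min\{x,y\}\leq c$ (giving $xy\leq c$) or $x,y\geq 1-\lambda$ (giving $xy\geq(1-\lambda)^2$). Hence $K\cdot K\subset[0,c]\cup[(1-\lambda)^2,1]$, which equals $[0,1]$ only when $c\geq(1-\lambda)^2$.

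\emph{Sufficiency: strategy.} Assume $c\geq(1-\lambda)^2$. To show every $u\in[0,1]$ factors as $u=xy$ with $x,y\in K$, I would approximate from above using the cylinder covers $K_n=\bigcup_{|\omega|=n}f_\omega(I)$, a decreasing sequence of finite unions of closed intervals of length $\lambda^n$ with $\bigcap_n K_n=K$. The heart of the proof is the \emph{Main Lemma}: $K_n\cdot K_n=[0,1]$ for every $n\geq 0$. Granting this, the theorem follows by compactness: for each $n$ pick $(x_n,y_n)\in K_n\times K_n$ with $x_ny_n=u$, extract a convergent subsequence, and use that the $K_n$ decrease to $K$ to conclude the limit $(x,y)$ lies in $K\times K$ with $xy=u$.

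\emph{Toward the Main Lemma.} The base case $n=1$ is direct: $K_1=[0,c]\cup[1-\lambda,1]$, so $K_1\cdot K_1=[0,c]\cup[(1-\lambda)^2,1]=[0,1]$ under the hypothesis. For $n\geq 2$, I would use the self-similarity
\[K_n\cdot K_n=\bigcup_{i,j\in\{1,2,3\}}f_i(K_{n-1})\cdot f_j(K_{n-1}),\]
together with the affine form $f_i(x)=\lambda x+a_i$ (with $a_1=0$, $a_2=c-\lambda$, $a_3=1-\lambda$), and for each $u$ seek an index pair $(i,j)$ plus an affine substitution reducing the equation $xy=u$ to a factorization of some $u'$ inside $K_{n-1}\cdot K_{n-1}$.

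\emph{Main obstacle.} The technical core is a chain-cover inequality at each level of refinement. In the ``corner'' configurations where one coordinate's address is $1^{k}$ and the other $3^{n-k}$, the naive four-cell cover of a single rectangle can fail: the hyperbola $xy=u$ for $u$ near $c$ or $(1-\lambda)^2$ may pass through such a rectangle without meeting any of its four sub-rectangles. The remedy is that the witness rectangles at successive scales need not be nested — by allowing the level-$n$ witness to differ from the projection of the level-$(n-1)$ witness, one still obtains $(x_n,y_n)\in K_n\times K_n$ at every scale. The chain-cover inequality that must ultimately be verified reduces, in the tight direction, exactly to $c\geq(1-\lambda)^2$, reflecting the sharpness of the theorem.
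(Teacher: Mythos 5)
Your necessity argument is fine and matches the paper's Remark. Your limiting framework for sufficiency is also sound and is essentially the paper's Lemma \ref{key1}: since the cylinder covers $K_n$ decrease to $K$ and multiplication is continuous, $K\cdot K=\bigcap_n (K_n\cdot K_n)$, so it suffices to control the finite-level products. The problem is that everything then rests on your ``Main Lemma'' ($K_n\cdot K_n=[0,1]$ for all $n$), and you do not prove it: you verify $n=1$, correctly identify that the four-sub-rectangle cover of a product rectangle can fail, and then assert that a ``chain-cover inequality'' with non-nested witnesses fixes this and ``reduces, in the tight direction, exactly to $c\geq(1-\lambda)^2$.'' That last claim is exactly the content that needs proving, and as stated it is doubtful: the covering property $f(I_1,I_2)=f(\widetilde{I_1},\widetilde{I_2})$ genuinely fails for rectangles near the origin (the hyperbola $xy=u$ is too steep there), and no single inequality in $(\lambda,c)$ rescues a uniform induction over all pairs of level-$n$ cylinders. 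Allowing non-nested witnesses does not by itself produce a witness; you would still have to exhibit, for each $u$ and $n$, some level-$n$ rectangle meeting $xy=u$, which is a restatement of the lemma, not a proof of it.

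The paper's route around this obstruction has two ingredients you are missing. First, the covering identity (its Lemma \ref{key2}) is only established for basic intervals $[a,a+t],[b,b+t]$ with $a\geq b\geq 1-c-\lambda$, i.e.\ bounded away from $0$, and even there it needs the auxiliary inequalities $1-2c\leq\lambda$ and $(1-c)/2\leq\lambda$ (which happen to follow from the standing hypotheses). Second, to cover the part of $[0,1]$ that this restricted analysis cannot reach, the paper picks a tailored starting window $[A,B]\subset[1-c-\lambda,1]$ (namely $[1-\lambda,1]$, $[c-\lambda^2,1]$ or $[c-\lambda,1]$, depending on which of four parameter subregions $(\lambda,c)$ lies in), shows $f(K\cap[A,B],K\cap[A,B])=[\theta,1]$ with $\theta\leq\lambda$, and then uses $\lambda^nK\subset K$ to get $K\cdot K\supset\bigcup_n\lambda^n[\theta,1]\cup\{0\}=[0,1]$. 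Without some substitute for this restriction-plus-rescaling mechanism, your induction does not close, so the sufficiency direction remains unproved.
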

\setcounter{Remark}{1}
\begin{Remark}\label{Remark}
The necessary condition is due to the following observation:
$$K\subset [0,c]\cup [1-\lambda, 1], \mbox{ which implies  }
K\cdot K\subset [0,c]\cup [(1-\lambda)^2,1].$$ 
\end{Remark}
This paper is arranged as follows. In section 2, we give a proof of Theorem \ref{Main}. In section 3, we give two examples. Finally, we give some remarks.
\section{Proof of Theorem \ref{Main}}
In this section, we first prove two useful lemmas. 
\subsection{Preliminaries}
Let $I=[0,1]$. For any $(i_1\cdots i_n)\in\{1,2,3\}^{n}$, we call $f_{i_1\cdots i_n}(I)$ a basic interval with length $\lambda^n$. Denote by $E_n$ the collection of  all the basic intervals with length $\lambda^n$. Let $J\in E_n$. Denote $\widetilde{J}=\cup_{i=1}^{3}I_{n+1,i}$, where $I_{n+1,i}\in E_{n+1}, I_{n+1,i}\subset J, i=1,2,3$.  Let $[A,B]\subset [0,1]$, where $A$ and $B$ are  the  left and right endpoints of  some basic intervals in $E_k$ for some $k\geq 1$, respectively. $A$ and $B$ may not in the same basic interval.  In the following lemma, we choose $A$ and $B$ in this way.  Let $F_k$ be the collection of all the basic intervals in $[A,B]$ with length  $\lambda^k, k\geq k_0$ for some $k_0\in\mathbb{N}^{+}$, i.e.  the union of all the elements of $F_k$ is denoted by $G_k=\cup_{i=1}^{t_k}I_{k,i}$, where $t_k\in \mathbb{N}^{+}$,  $I_{k,i}\in E_k$ and $I_{k,i}\subset [A,B]$. Clearly, by the definition of $G_n$, it follows that $G_{n+1}\subset G_n$ for any $n\geq k_0.$
\begin{Lemma}\label{key1}
Assume $F:\mathbb{R}^2\to \mathbb{R}$ is a continuous function.  
Suppose $A$ and $B$ are  the  left and right endpoints of  some basic intervals in $E_{k_0}$ for some $k_0\geq 1$, respectively. 
Then  $K\cap [A,B]=\cap_{n={k_0}}^{\infty}G_n$.
Moreover, if   for any $n\geq k_0$ and any basic intervals $I_1, I_2\subset G_n$, 
$$F(I_1, I_2)=F(\widetilde{I_1}, \widetilde{I_2}),$$
then $F(K\cap [A,B],K\cap [A,B] )=F(G_{k_0}, G_{k_0}).$ 
\end{Lemma}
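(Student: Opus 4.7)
My plan is to prove the two assertions separately. For the set identity $K\cap[A,B]=\bigcap_{n\ge k_0}G_n$, I will use that each $G_n$ is contained in $K_n:=\bigcup_{|\omega|=n}f_\omega(I)$ and in $[A,B]$, and that the nested sets $K_n$ decrease to $K$; this gives $\bigcap_n G_n \subseteq K\cap[A,B]$ immediately. The reverse inclusion is the interesting one: given $x\in K\cap[A,B]$ and a level $n\ge k_0$, I locate a basic interval $L\in E_n$ containing $x$ (which exists since $K=\bigcup_{|\omega|=n}f_\omega(K)$). If $L\subseteq[A,B]$, then $x\in G_n$. Otherwise $L$ extends past $A$ or past $B$; I will treat the left-side case, the other being symmetric. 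Writing $A=f_\alpha(0)$ with $|\alpha|=k_0$, the basic interval $L':=f_{\alpha 1^{n-k_0}}(I)$ of level $n$ has left endpoint exactly $A$ and length $\lambda^n$, so an endpoint comparison forces the portion of the sticking-out $L$ that lies in $[A,B]$ to be contained in $L'$. Hence $x\in L'\subseteq G_n$.

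For the main functional identity, the strategy is to iterate the hypothesis $F(I_1,I_2)=F(\widetilde{I_1},\widetilde{I_2})$ through every level. Decomposing $G_{k_0}=\bigcup_i I_{k_0,i}$ with $I_{k_0,i}=f_{\omega_i}(I)$, I note that the three children of $I_{k_0,i}$ sit inside $I_{k_0,i}\subseteq G_{k_0}\subseteq[A,B]$, hence in $G_{k_0+1}$, so the hypothesis reapplies at each stage. By induction this yields
$$F(I_{k_0,i},I_{k_0,j})=F\bigl(f_{\omega_i}(K_m),f_{\omega_j}(K_m)\bigr)\qquad\text{for every }m\ge 0,$$
where $K_m=\bigcup_{|\tau|=m}f_\tau(I)$. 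Since $K_m\to K$ in Hausdorff distance and $F$ is continuous on the compact square $[0,1]^2$, passing $m\to\infty$ gives $F(I_{k_0,i},I_{k_0,j})=F(f_{\omega_i}(K),f_{\omega_j}(K))$. Because $f_{\omega_i}(K)\subseteq K\cap I_{k_0,i}\subseteq K\cap[A,B]$, taking the union over the pairs $(i,j)$ delivers $F(G_{k_0},G_{k_0})\subseteq F(K\cap[A,B],K\cap[A,B])$; the reverse inclusion is immediate from the first assertion since $K\cap[A,B]\subseteq G_{k_0}$.

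The main obstacle I anticipate is the reverse inclusion in the first part: ruling out that a basic interval at level $n$ sticking out of $[A,B]$ produces points of $K\cap[A,B]$ missed by the clean collection $G_n$. This is precisely where the assumption that $A$ and $B$ are endpoints of basic intervals at level $k_0$, rather than arbitrary points, is used in an essential way, through the boundary-aligned intervals $f_{\alpha 1^{n-k_0}}(I)$ and $f_{\beta 3^{n-k_0}}(I)$. After that hurdle, the remainder is routine: iterating an invariance on a self-similar tower and passing to a Hausdorff limit under the continuous map $F$.
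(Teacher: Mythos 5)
Your argument is correct and follows essentially the same route as the paper: iterate the invariance $F(I_1,I_2)=F(\widetilde{I_1},\widetilde{I_2})$ over all pairs of basic intervals level by level, then use continuity of $F$ on nested compact sets to pass to the limit $K\cap[A,B]=\bigcap_{n\ge k_0}G_n$. The only differences are cosmetic --- the paper works globally with $\bigcap_n F(G_n,G_n)$ while you work pair by pair with a Hausdorff limit, and your careful verification of the set identity $K\cap[A,B]=\bigcap_n G_n$ fills in a step the paper simply asserts.
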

\begin{proof}
Let $G_n=\cup_{i=1}^{t_n}I_{n,i}$ for some $t_n\in \mathbb{N}^{+}$, where $I_{n,i}\in E_n$ and $I_{n,i}\subset [A,B]$.    Then by the construction of $G_n$, i.e. $G_{n+1}\subset G_n$ for any $n\geq k_0$,  it follows that   $$K\cap [A,B]=\cap_{n=k_{0}}^{\infty}G_n.$$ By the continuity of $F$, we conclude that 
\begin{eqnarray}\label{identity}
F(K\cap  [A,B],K\cap  [A,B] )=\cap_{n=k_0}^{\infty}F(G_n, G_n).
\end{eqnarray}
By virtue of the relation $G_{n+1}=\widetilde{G_n}$ and the condition in the lemma, we have 
\begin{eqnarray*}
F(G_n, G_n)&=& \cup_{1\leq i,j\leq t_n}F(I_{n,i}, I_{n,j})\\&=
&\cup_{1\leq i,j\leq t_n}F(\widetilde{I_{n,i}},\widetilde{I_{n,j}})\\&=&
F(\cup_{1\leq i\leq t_n}\widetilde{I_{n,i}},\cup_{1\leq j\leq k_n}\widetilde{I_{n,j}})\\&=&F(G_{n+1}, G_{n+1}).
\end{eqnarray*}
Therefore,  $F(K\cap  [A,B],K\cap  [A,B] )=F(G_{k_0}, G_{k_0})$ follows immediately   from   identity (\ref{identity}) and $F(G_n, G_n)=F(G_{n+1}, G_{n+1})$ for any $n\geq k_0.$ 
\end{proof}
\begin{Lemma}\label{key2}
 Let $I_1=[a,a+t], I_2=[b,b+t]$ be two basic intervals. If $a\geq b\geq 1-c-\lambda$ and $c$ satisfies the following inequalities 
\begin{equation*}
\left\lbrace\begin{array}{cc}
                (1-\lambda)^2\leq c\\
                1-2c\leq \lambda\\
                \dfrac{1-c}{2}\leq \lambda\\
                \end{array}\right.
\end{equation*}
then $f(I_1, I_2)=f(\widetilde{I_1}, \widetilde{I_2}),$ where $f(x,y)=xy.$
\end{Lemma}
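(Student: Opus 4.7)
The strategy is to unpack $\widetilde{I_1}$ and $\widetilde{I_2}$ explicitly, decompose $f(\widetilde{I_1}, \widetilde{I_2})$ into a union of four closed intervals, and verify that these four intervals form an overlapping chain whose union is precisely $f(I_1, I_2)$.

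First I would analyze the structure of $\widetilde{I_j}$. The standing hypothesis $f_1(I) \cap f_2(I) \neq \emptyset$ is equivalent to $c \leq 2\lambda$, so the first two sub-basic intervals of any level-$n$ basic interval merge into one interval, while $(f_1(I)\cup f_2(I)) \cap f_3(I) = \emptyset$ is equivalent to $c < 1-\lambda$ and forces a genuine gap before the third sub-interval. Consequently
\[
\widetilde{I_1} = L_1 \cup R_1, \qquad \widetilde{I_2} = L_2 \cup R_2,
\]
with $L_1 = [a, a+ct]$, $R_1 = [a+(1-\lambda)t,\, a+t]$ and analogously for $\widetilde{I_2}$ with $b$ in place of $a$. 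Since $(x,y)\mapsto xy$ is continuous and monotone on the positive quadrant, $f(\widetilde{I_1},\widetilde{I_2})$ is the union of the four closed intervals $L_1 L_2,\; R_1 L_2,\; L_1 R_2,\; R_1 R_2$. This union lies in $f(I_1,I_2)=[ab,(a+t)(b+t)]$ and contains both endpoints, so the lemma reduces to showing the union is connected.

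Using $a \geq b$, a direct comparison of the left and right endpoints places the four intervals in the common increasing order $L_1 L_2,\; R_1 L_2,\; L_1 R_2,\; R_1 R_2$. Connectedness is therefore equivalent to the three ``no-gap'' inequalities
\begin{align*}
&\text{(i)}\quad (a+ct)(b+ct) \geq (a+(1-\lambda)t)\,b,\\
&\text{(ii)}\quad (a+t)(b+ct) \geq a\,(b+(1-\lambda)t),\\
&\text{(iii)}\quad (a+ct)(b+t) \geq (a+(1-\lambda)t)(b+(1-\lambda)t).
\end{align*}

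Each of (i)--(iii), after expansion and factoring out $t$, becomes a linear estimate in $a,b$ plus a residual in $t$. Inequality (i) reduces to $ac + c^2 t \geq b(1-c-\lambda)$, which follows from $a \geq b$ together with $c \geq 1-c-\lambda$ (i.e.\ the condition $1-2c\leq\lambda$). Inequality (ii) reduces to $b+ct \geq a(1-c-\lambda)$, which follows from $b\geq 1-c-\lambda$ and the automatic bound $a\leq 1$ coming from $I_1\subset[0,1]$. The main obstacle is (iii): after simplification it becomes
\[
a\lambda - b(1-c-\lambda) + t\bigl(c-(1-\lambda)^2\bigr) \geq 0,
\]
and this is where \emph{both} remaining hypotheses play a role. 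The bound $(1-c)/2\leq\lambda$ yields $\lambda\geq 1-c-\lambda$, so by $a\geq b$ the first two terms are already non-negative; the bound $(1-\lambda)^2\leq c$ ensures that the residual $t$-term is non-negative as well. Assembling the three estimates gives $f(\widetilde{I_1},\widetilde{I_2})=[ab,(a+t)(b+t)]=f(I_1,I_2)$, which is the desired identity.
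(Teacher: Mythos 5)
Your proposal is correct and follows essentially the same route as the paper: the same decomposition of $f(\widetilde{I_1},\widetilde{I_2})$ into the four product intervals $L_1L_2, R_1L_2, L_1R_2, R_1R_2$ (the paper's $J_1,\dots,J_4$ with $d=1-\lambda$), the same ordering under $a\geq b$, and the same three no-gap inequalities, each discharged by the same hypothesis ($1-2c\leq\lambda$ for the first, $b\geq 1-c-\lambda$ with $a\leq 1$ for the second, and $(1-c)/2\leq\lambda$ together with $(1-\lambda)^2\leq c$ for the third). The algebraic groupings differ only cosmetically from the paper's.
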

\begin{proof}
Since $I_1=[a,a+t], I_2=[b,b+t]$, it follows that 
$$\widetilde{I_1}=[a, a+ct]\cup [a+t-\lambda t, a+t], \widetilde{I_2}=[b, b+ct]\cup [b+t-\lambda t, b+t].$$
Clearly, 
$f(I_1, I_2)=[ab, (a+t)(b+t)]$. 
Now we calculate $f(\widetilde{I_1}, \widetilde{I_2})$.  Without loss of generality, we may assume $a\geq b$. After  simple calculation, 
$$f(\widetilde{I_1}, \widetilde{I_2})=J_1\cup J_2\cup J_2\cup J_4,$$
where 
\begin{eqnarray*}
J_1&=&[ab, (a+ct)(b+ct)]=[e_1,h_1]   \\
J_2&=& [b(a+dt), (a+t)(b+ct)]=[e_2, h_2]\\
J_3&=&[a(b+dt), (a+ct)(b+t)]=[e_3,h_3]\\
J_4&=& [ (a+dt)(b+dt), (a+t)(b+t)]=[e_4,h_4],
\end{eqnarray*}
 and $d=1-\lambda.$
Note that $f(\widetilde{I_1}, \widetilde{I_2})=f(I_1, I_2)=[ab, (a+t)(b+t)]$ if and only if 
\begin{equation*}
\left\lbrace\begin{array}{cc}
               h_1-e_2\geq 0\\
                h_2-e_3\geq0\\
                h_3-e_4 \geq0\\
                \end{array}\right.
\end{equation*}
Therefore, it suffices to prove the above inequalities. 

(I) If $a\geq b,$ then%
\begin{eqnarray*}
  h_1-e_2&=&(a+ct)(b+ct)-(a+dt)b \\
&=&t\left( c^{2}t+ac+bc-bd\right)  \\
&\geq &t\left( c^{2}t+bc+bc-bd\right)  \\
&=&t\left( c^{2}t+b(2c-d)\right) \geq 0.
\end{eqnarray*}
Therefore, if  
\begin{equation*}
2c-d\geq 0 (\Leftrightarrow 2c\geq 1-\lambda)
\end{equation*}
then 
\begin{equation*}
(a+ct)(b+ct)-(a+dt)b\geq 0.
\end{equation*}
(II) We  need to show $$h_2-e_3=(a+t)(b+ct)-a(b+dt)=t\left( b+ac-ad+ct\right)
=ct^{2}+t(b+ac-ad)\geq 0.$$
In fact, the following inequality is sufficient,
\begin{equation*}
\sup_{a\in \lbrack b,1]}a(d-c)\leq b
\end{equation*}%
i.e., 
\begin{equation*}
1-\lambda-c=(d-c)\leq b, 
\end{equation*}
which is the assumption in lemma. 

(III) If $a\geq b,$ then%
\begin{eqnarray*}
h_3-e_4&=&(b+t)(a+ct)-(a+dt)(b+dt) \\
&=&t\left( a-d^{2}t-ad+bc-bd+ct\right). 
\end{eqnarray*}
$\allowbreak $It suffices to prove that $a-d^{2}t-ad+bc-bd+ct\geq 0$ if $%
a\geq b.$ When $a\geq b,$ we obtain that $a-ad\geq b-bd$ as $%
a-ad-(b-bd)=\left( 1-d\right) \left( a-b\right) \geq 0.$ As such
\begin{eqnarray*}
&&a-d^{2}t-ad+bc-bd+ct \\
&\geq &t\left( c-d^{2}\right) +b-bd+bc-bd \\
&\geq &t\left( c-d^{2}\right) +b(1-2d+c).
\end{eqnarray*}%
If  $c-d^{2}\geq 0$ and  \ 
\begin{equation*}
1-2d+c\geq 0(\Leftrightarrow c\geq 1-2\lambda)
\end{equation*}%
then 
\begin{equation*}
a-d^{2}t-ad+bc-bd+ct\geq 0
\end{equation*}%
Under  the condition 
\begin{equation*}
a\geq b\geq d-c=1-\lambda-c,
\end{equation*}
if  $c$ and $d$ satisfy the following inequalities 
\begin{equation*}
1-2d+c\geq 0,\text{ }2c\geq d\geq c\text{ and }c\geq d^{2},
\end{equation*}
then 
 $f(I_1, I_2)=f(\widetilde{I_1}, \widetilde{I_2}).$
\end{proof}
\subsection{Proofs of some lemmas}
We  first give an outline of the proof of Theorem \ref{Main}.  First, 
by the conditions for  $c$ and $\lambda$, see Lemma \ref{nec} and Remark \ref{Remark},  we have  $$\lambda\leq c\leq 2\lambda, (1-\lambda)^2\leq c <1-\lambda. $$ 
Therefore,   if $K\cdot K=[0,1]$,  then $(\lambda,c)$ should be in the  purple region (the first picture of Figure 1). 
Conversely, we shall prove that for any $(\lambda,c)$ in the purple region, $K\cdot K=[0,1]$. 
We  partition the purple region into five subregions, see  the last picture of Figure 1. 
More precisely,  in Lemma \ref{useful}, we prove that for the brown region in the last picture, $K\cdot K=[0,1]$.
In Lemma \ref{lem2}, we prove that for the gray region (the second picture), $K\cdot K=[0,1]$. 
In Lemma \ref{lem3}, we show that if $(\lambda,c)$ in the orange region (the third picture), then $K\cdot K=[0,1].$ In Lemma \ref{lem4}, when $(\lambda,c)$ in the blue  region (the fourth picture), we prove $K\cdot K=[0,1].$
Note that the  union of the regions generated  from Lemma \ref{lem2},  Lemma \ref{lem3} and Lemma \ref{lem4} is precisely the purple region in the first picture. 

Before, we prove Lemmas \ref{useful}, \ref{lem2},   \ref{lem3} and \ref{lem4}. We give  the following lemmas which are useful  to our analysis. 
\begin{Lemma}\label{nec}
Let $K$ be the self-similar set of the following  IFS $$\{f_1(x)=\lambda x, f_2(x)=\lambda x+c-\lambda,f_3(x)=\lambda x+1-\lambda, 0<\lambda<1\}.$$ 
If  $f_1(I)\cap f_2(I)\neq \emptyset, (f_1(I)\cup f_2(I))\cap f_3(I)=\emptyset,$
then $\lambda\leq c\leq 2\lambda,  0<\lambda<1-c.$ If $K\cdot K=[0,1]$, then $\lambda\geq 2-\sqrt{3}$. 
\end{Lemma}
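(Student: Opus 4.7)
The plan is to separate the lemma into two independent claims: the geometric conditions forced by the IFS assumptions, and the algebraic consequence of $K\cdot K=[0,1]$.

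First, I would write down the three basic intervals explicitly: $f_1(I)=[0,\lambda]$, $f_2(I)=[c-\lambda,c]$, and $f_3(I)=[1-\lambda,1]$. Since $I=[0,1]$ is the convex hull of $K$, each $f_i$ must send $I$ into $I$; applied to $f_2$ this forces $c-\lambda\geq 0$, i.e.\ $\lambda\leq c$. The overlap hypothesis $f_1(I)\cap f_2(I)\neq\emptyset$ then reads $c-\lambda\leq\lambda$, giving $c\leq 2\lambda$. Finally, the separation hypothesis $(f_1(I)\cup f_2(I))\cap f_3(I)=\emptyset$ requires $c<1-\lambda$ (the right endpoint of $f_2(I)$, which is already to the right of $f_1(I)$, must sit strictly left of $1-\lambda$), equivalently $\lambda<1-c$. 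This settles $\lambda\leq c\leq 2\lambda$ and $0<\lambda<1-c$.

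For the second claim, I would invoke Remark \ref{Remark}: from $K\subset[0,c]\cup[1-\lambda,1]$ one obtains
\begin{equation*}
K\cdot K\subset[0,c]\cup[(1-\lambda)^2,1],
\end{equation*}
since a product of two factors from $[1-\lambda,1]$ is at least $(1-\lambda)^2$ while any product involving a factor from $[0,c]$ is at most $c$. If $K\cdot K=[0,1]$, then in particular the interval $(c,(1-\lambda)^2)$ must be empty, forcing $(1-\lambda)^2\leq c$. Combining this with the already established $c\leq 2\lambda$ yields $(1-\lambda)^2\leq 2\lambda$, that is, $\lambda^2-4\lambda+1\leq 0$. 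Solving this quadratic and using $0<\lambda<1$, I get $\lambda\geq 2-\sqrt{3}$.

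There is no real obstacle here; the argument is a direct unpacking of the geometric assumptions plus one elementary quadratic inequality. The only small point requiring care is to justify the inclusion $K\subset[0,c]\cup[1-\lambda,1]$ rigorously (it follows because $K=f_1(K)\cup f_2(K)\cup f_3(K)\subset f_1(I)\cup f_2(I)\cup f_3(I)\subset[0,c]\cup[1-\lambda,1]$, using $f_1(I)\cup f_2(I)=[0,c]$ thanks to the overlap $c-\lambda\leq\lambda$).
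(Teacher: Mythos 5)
Your proposal is correct and follows essentially the same route as the paper: the paper likewise dismisses the first statement as trivial and proves the second via the covering $K\subset[0,c]\cup[1-\lambda,1]$, the inclusion $K\cdot K\subset[0,2\lambda]\cup[(1-\lambda)^2,1]$, and the quadratic $\lambda^2-4\lambda+1\leq 0$ with roots $2\pm\sqrt{3}$. Your version merely adds the details of the "trivial" part and passes through the intermediate inequality $(1-\lambda)^2\leq c$ rather than arguing by contradiction with $2\lambda<(1-\lambda)^2$, which is a presentational difference only.
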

\begin{proof}
The first statement is trivial. We only prove the second one. 
Since  $\lambda\leq c\leq 2 \lambda$, it follows that 
 $K\cdot K\subset [0,2\lambda]\cup [(1-\lambda)^2,1].$ If $0<\lambda< 2-\sqrt{3},$ then
$2\lambda<(1-\lambda)^2$, which contradicts with $K\cdot K=[0,1].$ 
\end{proof}

\begin{Lemma}\label{key3}
If $(\lambda, c)$ satisfies the following conditions 
\begin{equation*}
\left\lbrace\begin{array}{cc}
                (1-\lambda)^2\leq c<1-\lambda\\
                \lambda\leq c\leq 2 \lambda\\
                \end{array}\right.
\end{equation*}
then 
\begin{equation*}
\left\lbrace\begin{array}{cc}
                1-2c\leq \lambda\\
                \dfrac{1-c}{2}\leq \lambda.\\
                \end{array}\right.
\end{equation*}
\end{Lemma}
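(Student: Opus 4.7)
The plan is to translate the two target inequalities $1-2c\leq\lambda$ and $(1-c)/2\leq\lambda$ into the equivalent forms $c\geq (1-\lambda)/2$ and $c\geq 1-2\lambda$ respectively, and then verify each using only the two lower bounds $c\geq (1-\lambda)^2$ and $c\geq\lambda$ that come from the hypotheses. The upper bounds $c\leq 2\lambda$ and $c<1-\lambda$ will play no role.

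The bound $c\geq 1-2\lambda$ (equivalently, $(1-c)/2\leq\lambda$) is a one-line derivation from the expansion $(1-\lambda)^2=1-2\lambda+\lambda^2$, since the chain $c\geq (1-\lambda)^2\geq 1-2\lambda$ follows from $\lambda^2\geq 0$.

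For the bound $c\geq (1-\lambda)/2$ (equivalently, $1-2c\leq\lambda$) I would split on the size of $\lambda$. When $\lambda\geq 1/3$, the hypothesis $c\geq\lambda$ already suffices, because $\lambda\geq 1/3$ is the same as $\lambda\geq (1-\lambda)/2$. When $\lambda<1/3$, and in particular $\lambda<1/2$, I would instead use $c\geq (1-\lambda)^2$, and note that $1-\lambda>1/2$ forces $(1-\lambda)^2\geq (1-\lambda)/2$ after multiplying through by $1-\lambda>0$. Since these two ranges exhaust $(0,1)$, the conclusion follows in either case.

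There is no real obstacle: the one judgment to make is spotting the case split. Neither of the two available lower bounds on $c$ dominates the required threshold $(1-\lambda)/2$ across the full range of $\lambda$, so a single uniform estimate cannot work, but the complementary ranges $\lambda\geq 1/3$ and $\lambda<1/3$ match up exactly with which of the two hypotheses is strong enough on each side.
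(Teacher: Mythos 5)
Your proof is correct, and it is genuinely different from what the paper does: the paper's entire ``proof'' of this lemma is the sentence ``The proof is due to the first picture of Figure 1,'' i.e.\ an appeal to a plotted region with no algebra at all. Your argument supplies the missing rigour. Both target inequalities rewrite as lower bounds on $c$, namely $c\geq(1-\lambda)/2$ and $c\geq 1-2\lambda$; the second follows in one line from $c\geq(1-\lambda)^2=1-2\lambda+\lambda^2\geq 1-2\lambda$, and for the first your case split is exactly the right observation: $c\geq\lambda\geq(1-\lambda)/2$ holds precisely when $\lambda\geq 1/3$, while $c\geq(1-\lambda)^2\geq(1-\lambda)/2$ holds precisely when $\lambda\leq 1/2$, and the two ranges overlap to cover all of $(0,1)$. (As you note, neither hypothesis alone dominates the threshold $(1-\lambda)/2$ uniformly, so the split is unavoidable.) What your approach buys is a self-contained verification that does not depend on trusting a computer-generated picture; what the paper's approach buys is brevity, at the cost of not actually being a proof. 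Your version is the one that should appear in a careful write-up, and it also implicitly shows that the upper bounds $c\leq 2\lambda$ and $c<1-\lambda$ in the hypothesis are not needed for this particular implication.
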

\begin{proof}
The proof is due to the first picture of Figure 1. 
\end{proof}
In terms of this Lemma \ref{key3}, in Lemma \ref{key2}, the condition    
\begin{equation*}
\left\lbrace\begin{array}{cc}
 (1-\lambda)^2\leq c\\
                1-2c\leq \lambda\\
                \dfrac{1-c}{2}\leq \lambda\\
                \end{array}\right.
\end{equation*} can be replaced by 
\begin{equation*}
\left\lbrace\begin{array}{cc}
                (1-\lambda)^2\leq c<1-\lambda\\
                \lambda\leq c\leq 2 \lambda.\\
                \end{array}\right.
\end{equation*}

\begin{figure}
  \centering
  \includegraphics[width=146pt]{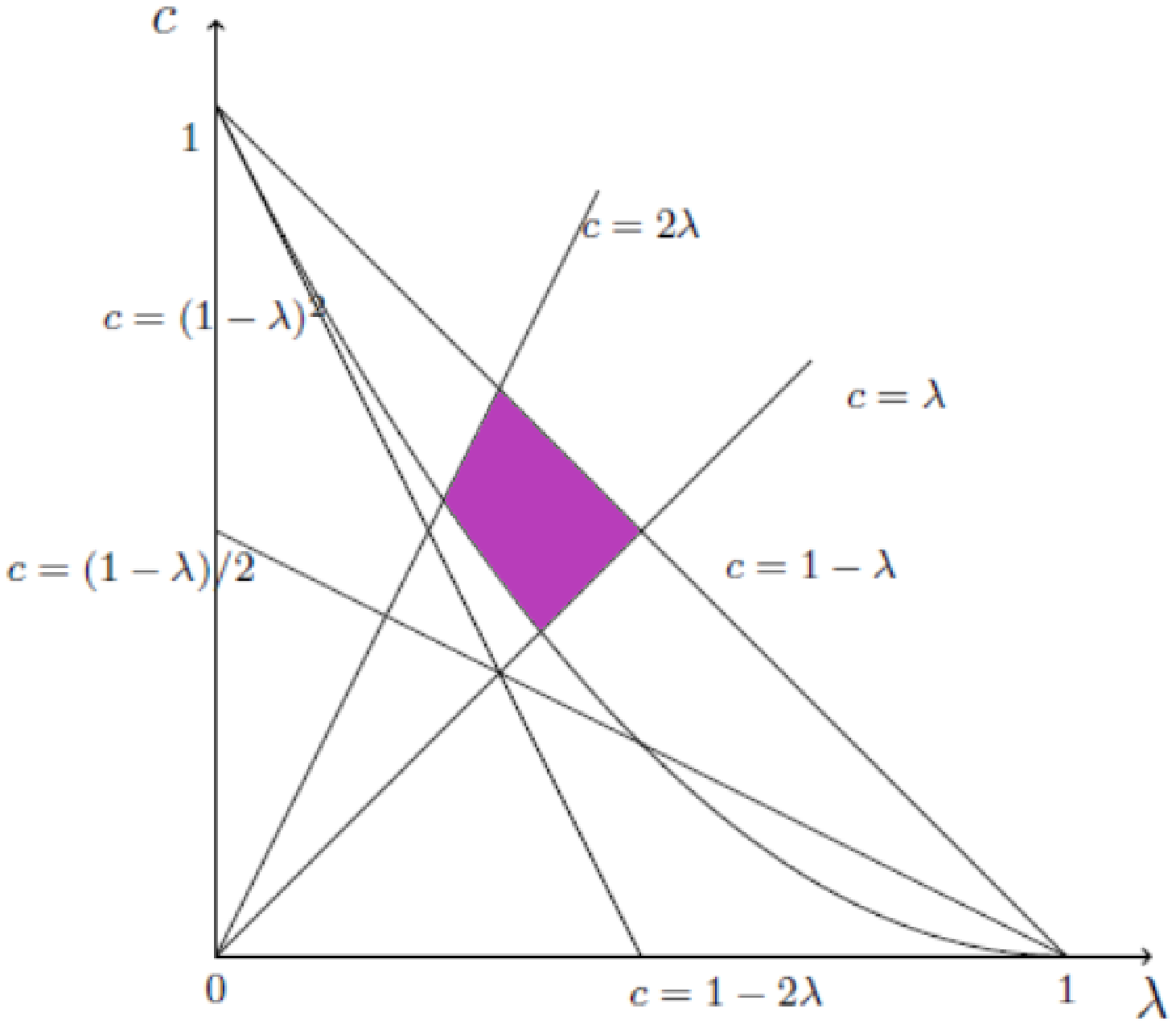}
   \includegraphics[width=146pt]{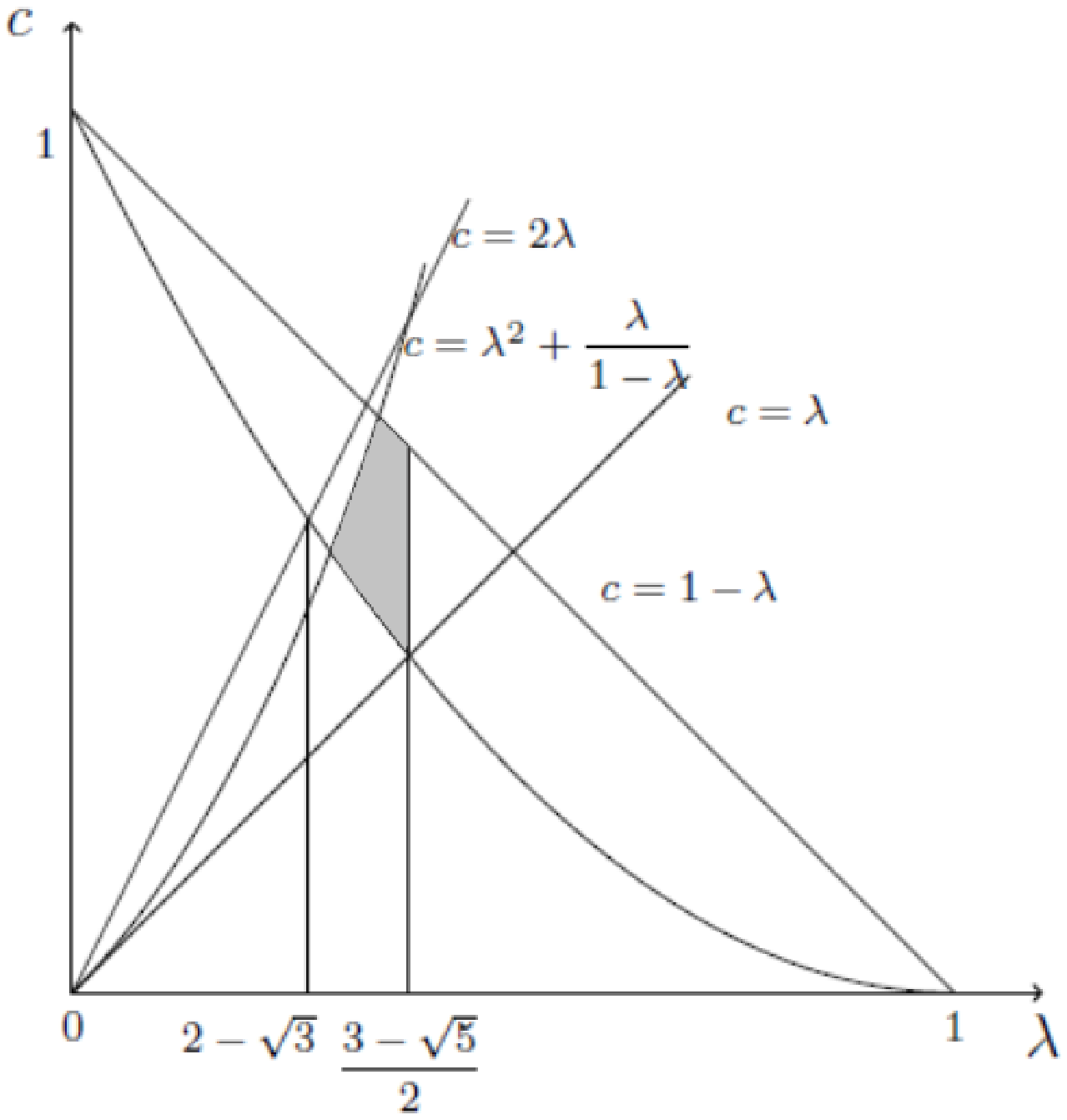}
    \includegraphics[width=146pt]{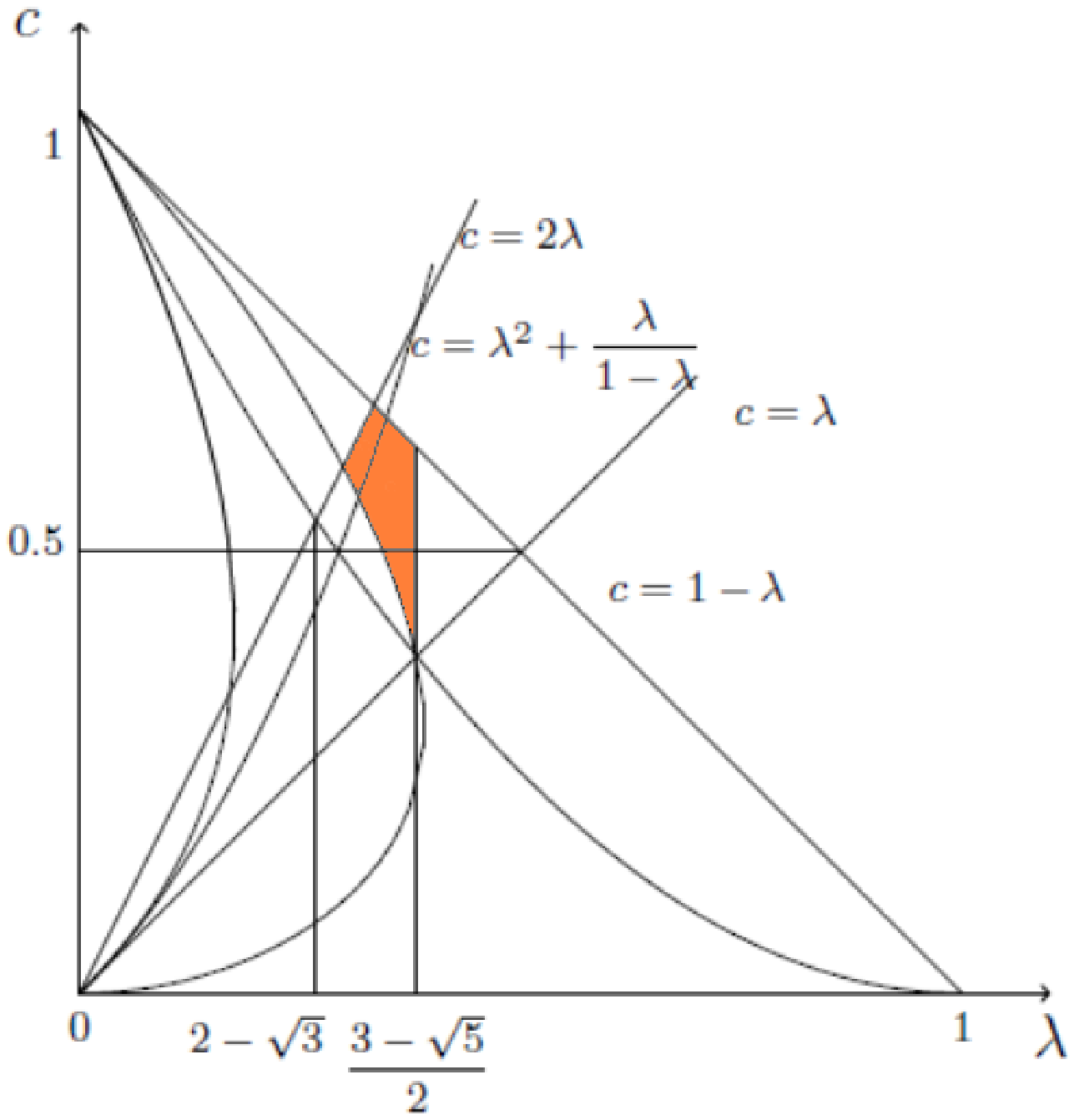}
       \includegraphics[width=146pt]{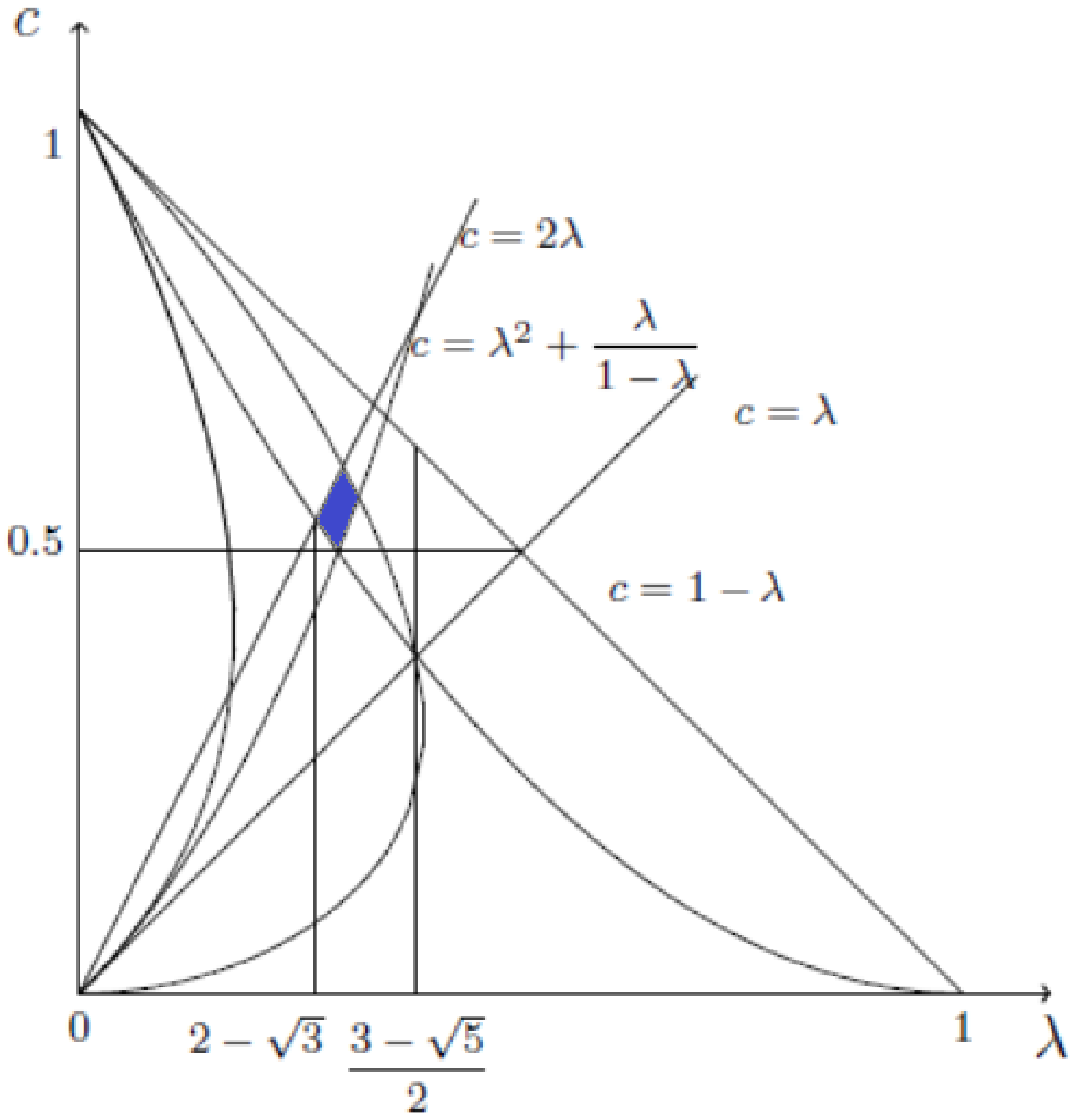}
      \includegraphics[width=146pt]{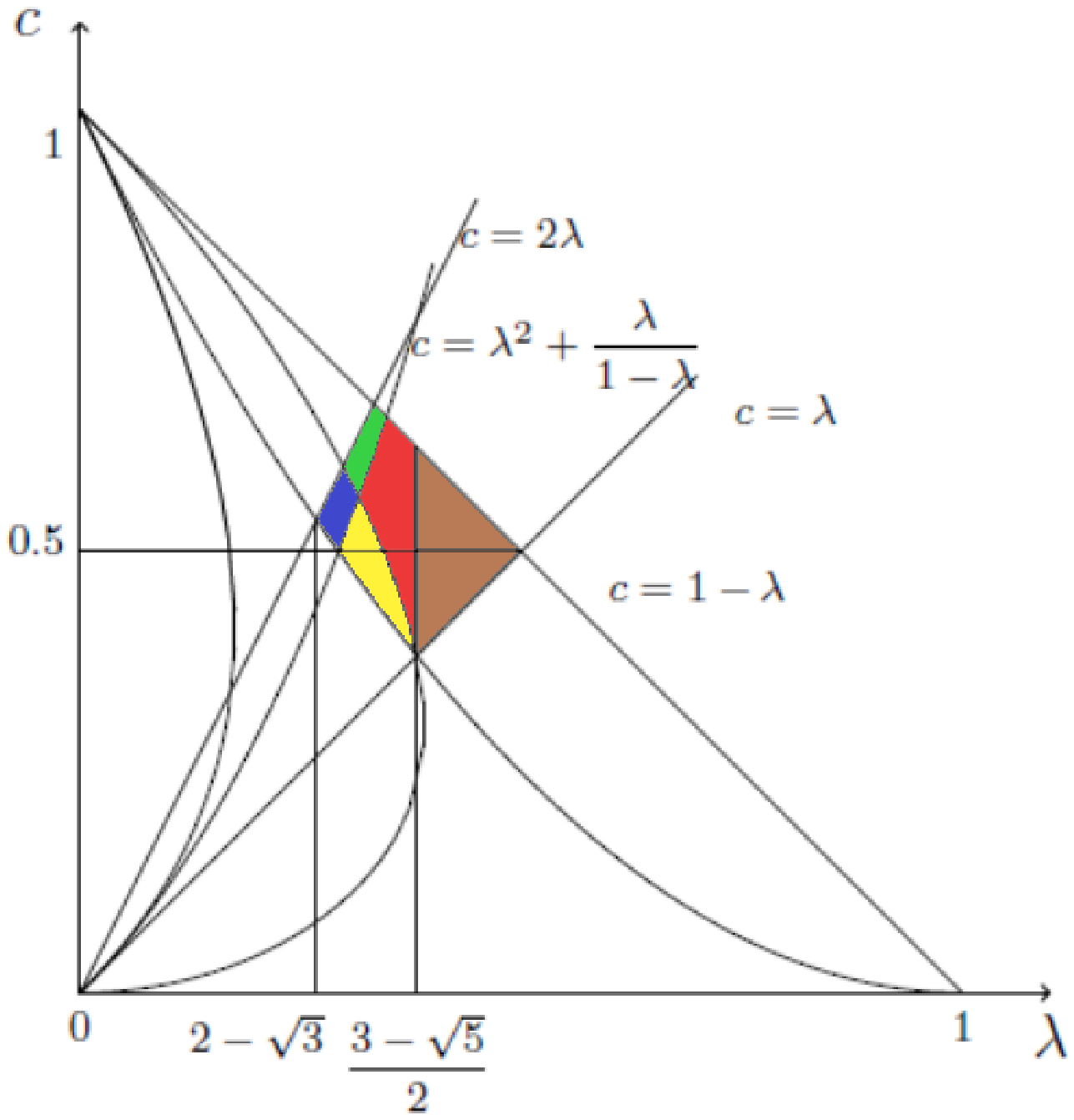}
   \caption{ Lemma \ref{useful} (brown region in the last picture), Lemma \ref{lem2} (gray region),  Lemma \ref{lem3} (orange region), Lemma \ref{lem4} (blue region)}
\end{figure}
\begin{Lemma}\label{useful}
If $(\lambda, c)$ satisfies the following conditions 
\begin{equation*}
\left\lbrace\begin{array}{cc}
                (1-\lambda)^2\leq c<1-\lambda\\
                \lambda\leq c\leq 2 \lambda\\
                \dfrac{3-\sqrt{5}}{2}\leq \lambda<1,
                \end{array}\right.
\end{equation*}
 then $K\cdot K=[0,1].$
\end{Lemma}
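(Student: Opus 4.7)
The plan is to show $K \cdot K \supseteq [0,1]$; the reverse inclusion is immediate since $K \subseteq [0,1]$. The argument has two stages: first, use the preliminary lemmas on the self-similar piece $f_3(K)$ to obtain the ``top'' interval $[(1-\lambda)^2, 1] \subseteq K \cdot K$; then, exploit $\lambda \in K$ to iteratively propagate this interval down to $0$, where the brown-region assumption is exactly what makes the iteration chain correctly.

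\textbf{Stage 1 (base interval).} Apply Lemma \ref{key1} with $[A,B] = [1-\lambda, 1]$ and $k_0 = 1$, so that $G_1 = f_3(I) = [1-\lambda,1]$ and $K \cap [1-\lambda,1] = f_3(K)$. Any basic interval inside $G_n \subset [1-\lambda,1]$ has left endpoint at least $1-\lambda \geq 1-c-\lambda$, so the hypothesis $a \geq b \geq 1-c-\lambda$ of Lemma \ref{key2} is automatic. The remaining inequalities $(1-\lambda)^2 \leq c$, $1-2c \leq \lambda$, $(1-c)/2 \leq \lambda$ required by Lemma \ref{key2} follow from the brown-region hypotheses via Lemma \ref{key3}. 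Hence Lemma \ref{key1} yields
\[
f(K \cap [1-\lambda, 1], K \cap [1-\lambda, 1]) = f(G_1, G_1) = [1-\lambda, 1] \cdot [1-\lambda, 1] = [(1-\lambda)^2, 1],
\]
so $[(1-\lambda)^2, 1] \subseteq K \cdot K$.

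\textbf{Stage 2 (iteration down to $0$).} Since $1 = f_3(1) \in K$, we have $\lambda = f_1(1) \in K$, and more generally $x \in K \Rightarrow \lambda x = f_1(x) \in K$. Consequently, if $z = xy \in K \cdot K$ then $\lambda z = (\lambda x) y \in K \cdot K$, i.e.\ $\lambda \cdot (K \cdot K) \subseteq K \cdot K$. Combining with Stage 1 gives
\[
K \cdot K \supseteq [(1-\lambda)^2, 1] \cup \lambda \cdot [(1-\lambda)^2, 1] = [(1-\lambda)^2, 1] \cup [\lambda(1-\lambda)^2, \lambda].
\]
The brown-region condition $\lambda \geq (3-\sqrt{5})/2$ is exactly equivalent to $(1-\lambda)^2 \leq \lambda$ (the smaller root of $x^2 - 3x + 1 = 0$), which is precisely the chaining inequality making these two intervals overlap and merge into $[\lambda(1-\lambda)^2, 1]$. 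Iterating the contraction by $\lambda$, and noting that the subsequent overlap conditions $\lambda \geq \lambda^k(1-\lambda)^2$ for $k \geq 1$ hold automatically, gives $K \cdot K \supseteq [\lambda^n(1-\lambda)^2, 1]$ for every $n \geq 0$. Letting $n \to \infty$ yields $(0,1] \subseteq K \cdot K$, and since $0 = f_1(0) \in K$ one also has $0 = 0 \cdot 0 \in K \cdot K$, so $K \cdot K = [0,1]$.

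The main obstacle is Stage 1: one must realise that Lemma \ref{key2}'s restrictive hypothesis $b \geq 1-c-\lambda$ fails for intervals near $0$, which is why Lemma \ref{key1} cannot be applied directly on $[0,1]$; instead we apply it only on $[1-\lambda,1]$, where the left-endpoint condition holds trivially. Once this base interval is in hand, the role of the threshold $\lambda \geq (3-\sqrt{5})/2$ is transparent: it is the exact value that makes $(1-\lambda)^2 \leq \lambda$, ensuring that a single iterated $\lambda$-contraction, repeated countably many times, reaches $0$ without ever leaving a gap.
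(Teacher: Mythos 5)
Your proposal is correct and follows essentially the same route as the paper: apply Lemma \ref{key1} together with Lemma \ref{key2} on $[A,B]=[1-\lambda,1]$ (where $1-\lambda\geq 1-c-\lambda$ makes the endpoint hypothesis automatic) to get $[(1-\lambda)^2,1]\subseteq K\cdot K$, then use $\lambda^n K\subseteq K$ and the chaining condition $\lambda\geq(1-\lambda)^2$, equivalent to $\lambda\geq\tfrac{3-\sqrt 5}{2}$, to write $[0,1]=\{0\}\cup\bigcup_{n\geq 0}\lambda^n[(1-\lambda)^2,1]$. Your write-up simply makes explicit the verification of Lemma \ref{key2}'s hypotheses and the overlap condition that the paper leaves implicit.
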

\begin{proof}
Since $1-\lambda\geq 1-c-\lambda$, 
in terms of Lemmas \ref{key1} and \ref{key2}, we let $[A,B]=[1-\lambda, 1]$. It is easy to check that 
$$f(K\cap [A,B], K\cap [A,B] )=[(1-\lambda)^2,1].$$ Therefore, 
$$ [0,1]\supset K\cdot K\supset \cup_{n=0}^{\infty}\lambda^n[(1-\lambda)^2,1]\cup\{0\}=[0,1].$$
\end{proof}

\begin{Lemma}\label{lem2}
For any $0<\lambda<1$,  the following inequality holds
 $$c(1-\lambda+\lambda c)>(c-\lambda^2)(1-\lambda^2).$$
Suppose $c$ and $\lambda$ satisfy the following inequalities,
\begin{equation*}
\left\lbrace\begin{array}{cc}
2-\sqrt{3}\leq \lambda< \dfrac{3-\sqrt{5}}{2}\\ 
                    (1-\lambda)^2\leq c<1-\lambda\\
                \lambda\leq c\leq 2 \lambda\\
                c\leq \lambda^2+\dfrac{\lambda}{1-\lambda}
                \end{array}\right.
\end{equation*}
 then  $K\cdot K=[0,1].$
\end{Lemma}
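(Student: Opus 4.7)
The overall strategy is to show that $K\cdot K$ contains a top slice $[(c-\lambda^{2})(1-\lambda),1]$, and then to iterate it downward using the scaling identity $\lambda(K\cdot K)=f_{1}(K)\cdot K\subset K\cdot K$. The last hypothesis $c\leq\lambda^{2}+\lambda/(1-\lambda)$ is precisely $(c-\lambda^{2})(1-\lambda)\leq\lambda$, which makes consecutive scaled copies $\lambda^{n}[(c-\lambda^{2})(1-\lambda),1]$ overlap; together with $0\in K\cdot K$ this gives $K\cdot K=[0,1]$.

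For the auxiliary inequality, I rewrite the difference as
\[\Phi(c,\lambda):=c(1-\lambda+\lambda c)-(c-\lambda^{2})(1-\lambda^{2})=c\lambda(c+\lambda-1)+\lambda^{2}(1-\lambda^{2}).\]
Using $c\geq(1-\lambda)^{2}$, which gives $c+\lambda-1\geq-\lambda(1-\lambda)$, I bound
\[\Phi\geq -c\lambda^{2}(1-\lambda)+\lambda^{2}(1-\lambda)(1+\lambda)=\lambda^{2}(1-\lambda)(1+\lambda-c)>0,\]
where positivity follows from $c<1-\lambda<1+\lambda$.

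For the main statement I apply Lemma \ref{key1} to $F(x,y)=xy$ with $[A,B]=[c-\lambda^{2},1]$, where $c-\lambda^{2}$ is the left endpoint of the level-$2$ basic interval $f_{23}(I)$. The Lemma \ref{key2} condition $b\geq 1-c-\lambda$ on left endpoints in $[A,B]$ reduces to $c-\lambda^{2}\geq 1-c-\lambda$, i.e.\ $c\geq(1-\lambda+\lambda^{2})/2$, which follows from $c\geq(1-\lambda)^{2}$ combined with $\lambda\leq(3-\sqrt{5})/2$ (equivalently $\lambda^{2}-3\lambda+1\geq 0$); the other conditions of Lemma \ref{key2} are ensured by Lemma \ref{key3}. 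So Lemma \ref{key1} gives $f(K\cap[A,B],K\cap[A,B])=f(G_{2},G_{2})$. Since $c\leq 2\lambda$, the level-$2$ pieces $f_{31}(I)$ and $f_{32}(I)$ merge, and $G_{2}=I_{a}\cup I_{b}\cup I_{c}$ with $I_{a}=[c-\lambda^{2},c]$, $I_{b}=[1-\lambda,1-\lambda+\lambda c]$, and $I_{c}=[1-\lambda^{2},1]$.

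The final step is to order the six products $I_{\mu}\cdot I_{\nu}$ by left endpoint and verify that the five upper products $I_{a}I_{b},\,I_{a}I_{c},\,I_{b}I_{b},\,I_{b}I_{c},\,I_{c}I_{c}$ fit together into a single interval $[(c-\lambda^{2})(1-\lambda),1]$. The decisive consecutive overlap is between $I_{a}I_{b}$ and $I_{a}I_{c}$: its gap-free condition is exactly the first algebraic inequality of the lemma. The remaining three consecutive overlaps reduce to $c\geq(1-\lambda)^{2}$ together with $\lambda<1/2$, which is automatic from $\lambda<(3-\sqrt{5})/2$. This yields $K\cdot K\supset[(c-\lambda^{2})(1-\lambda),1]$, and the scaling argument from the first paragraph finishes the proof. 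The main obstacle is precisely this overlap analysis among the five upper products; once the auxiliary inequality of the first part is in hand the rest is routine.
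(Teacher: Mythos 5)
Your proposal is correct and follows essentially the same route as the paper: restrict to $[A,B]=[c-\lambda^{2},1]$ (justified by $2c\geq 1-\lambda+\lambda^{2}$, i.e.\ $c-\lambda^{2}\geq 1-c-\lambda$), reduce via Lemmas \ref{key1} and \ref{key2} to products of the three maximal pieces $[c-\lambda^{2},c]$, $[1-\lambda,1-\lambda+\lambda c]$, $[1-\lambda^{2},1]$, use the first inequality of the lemma to glue the upper products into $[(c-\lambda^{2})(1-\lambda),1]$, and scale by $\lambda^{n}$ using $c\leq\lambda^{2}+\lambda/(1-\lambda)$. The only notable deviation is your proof of the auxiliary inequality via the direct bound $\Phi\geq\lambda^{2}(1-\lambda)(1+\lambda-c)>0$ under $c\geq(1-\lambda)^{2}$, in place of the paper's discriminant computation, and your direct verification of all consecutive overlaps rather than invoking the previously established inclusion $[(1-\lambda)^{2},1]\subset K\cdot K$; both variants are valid in the stated parameter range.
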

\begin{proof}
First, we prove that if $0<\lambda<1$, then $c(1-\lambda+\lambda c)>(c-\lambda^2)(1-\lambda^2)$. 

It is easy to check that  
$$x_1=\dfrac{-(\lambda-1)-\sqrt{(\lambda-1)^2-4(\lambda-\lambda^3)}}{2}, x_2=\dfrac{-(\lambda-1)+\sqrt{(\lambda-1)^2-4(\lambda-\lambda^3)}}{2}$$
are the roots of $c^2+(\lambda-1)c+\lambda-\lambda^3=0$. Since $0<\lambda<1$, it follows that 
$$(\lambda-1)^2-4(\lambda-\lambda^3)<0.$$ In other words, $x_1$ and $x_2$ are complex numbers rather than the reals.  Therefore, $$c^2+(\lambda-1)c+\lambda-\lambda^3\geq0.$$ 
With a similar discussion of  Lemma \ref{useful}, it follows that 
$$ K\cdot K\supset \cup_{n=0}^{\infty}\lambda^n([(1-\lambda)^2, 1]).$$
  By the assumptions $$2-\sqrt{3}\leq \lambda< \dfrac{3-\sqrt{5}}{2}, (1-\lambda)^2\leq c,$$ we conclude that 
$$2c\geq 2(1-\lambda)^2\geq 1-\lambda+\lambda^2.$$
In other words, $c-\lambda^2\geq 1-c-\lambda.$ Therefore, we may make use of Lemma \ref{key2} for $\widetilde{K}=K\cap [c-\lambda^2, 1]=([c-\lambda^2,c]\cup[1-\lambda,1-\lambda+c\lambda ]\cup[1-\lambda^2,1])\cap K$. 
Simple calculation yields that 
\begin{eqnarray*}
K\cdot K\supset f(\widetilde{K},\widetilde{K})&=&[(c-\lambda^2)^2, c^2]\cup [(c-\lambda^2)(1-\lambda),c(1-\lambda+c\lambda)] \\
&\cup&[ (c-\lambda^2)(1-\lambda^2), c]\cup [(1-\lambda)^2,(1-\lambda+c\lambda)^2]\\
&\cup&   [(1-\lambda^2)(1-\lambda), 1-\lambda+c\lambda]\cup [(1-\lambda^2)^2,1].
\end{eqnarray*}
  By the condition $(1-\lambda)^2\leq c$ and 
the consequence $$ K\cdot K\supset \cup_{n=0}^{\infty}\lambda^n([(1-\lambda)^2, 1]),$$
 we obtain that $K\cdot K\supset \cup_{n=1}^{\infty}\lambda^n[(c-\lambda^2)(1-\lambda),1].$
Since  $c\leq \lambda^2+\dfrac{\lambda}{1-\lambda}$, it follows that $\lambda\geq(c-\lambda^2)(1-\lambda)$. Therefore, 
$$[0,1]\supset K\cdot K\supset \cup_{n=0}^{\infty}\lambda^n[(c-\lambda^2)(1-\lambda),1]\cup\{0\}=[0,1],$$ as required. 
\end{proof}

\begin{Lemma}\label{lem3}
Suppose $c$ and $\lambda$ satisfy the following inequatlies,
\begin{equation*}
\left\lbrace\begin{array}{cc}
      2-\sqrt{3}\leq \lambda\leq \dfrac{3-\sqrt{5}}{2}\\
              (1-\lambda)^2\leq c<1-\lambda\\
                \lambda\leq c\leq 2 \lambda\\
               (c- \lambda^2)(1-\lambda) \leq   c^2\\
                \end{array}\right.
\end{equation*}
Then $K\cdot K=[0,1].$
\end{Lemma}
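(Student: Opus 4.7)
The plan is to mirror the proof of Lemma \ref{lem2} using the same set $\widetilde K$ and the same six product pieces, but to exploit the new hypothesis $(c-\lambda^2)(1-\lambda) \leq c^2$ so that the leftmost piece $J_1$ attaches to the chain, turning the scaling-down argument into a single step.

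First I take $[A,B] = [c-\lambda^2, 1]$ and note that the union $G_2$ of basic intervals of length $\lambda^2$ inside $[A,B]$ has three connected components
$$A_1 = [c-\lambda^2, c], \qquad A_2 = [1-\lambda, 1-\lambda+\lambda c], \qquad A_3 = [1-\lambda^2, 1].$$
Since $c \geq (1-\lambda)^2$ and $\lambda \leq (3-\sqrt 5)/2$, a short rearrangement gives $2c \geq 1-\lambda+\lambda^2$, equivalently $c-\lambda^2 \geq 1-\lambda-c$; hence every pair of basic intervals inside $[A,B]$ satisfies the hypothesis $a \geq b \geq 1-\lambda-c$ of Lemma \ref{key2}, and Lemma \ref{key3} supplies the remaining three technical side conditions. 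Lemma \ref{key1} then identifies
$$f\bigl(K \cap [A,B],\, K \cap [A,B]\bigr) \;=\; f(G_2, G_2) \;=\; \bigcup_{1 \leq i,j \leq 3} A_i \cdot A_j \;=\; J_1 \cup J_2 \cup J_3 \cup J_4 \cup J_5 \cup J_6,$$
where the $J_i$ are the six product intervals listed in Lemma \ref{lem2}.

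Next I would show that the six pieces amalgamate into the single interval $[(c-\lambda^2)^2, 1]$. As in Lemma \ref{lem2}, $J_2 \cup J_3 \cup J_4 \cup J_5 \cup J_6 = [(c-\lambda^2)(1-\lambda), 1]$: the unconditional inequality $c(1-\lambda+\lambda c) > (c-\lambda^2)(1-\lambda^2)$ merges $J_2$ with $J_3$, while the remaining overlaps reduce to elementary polynomial inequalities that follow from $c \geq (1-\lambda)^2$ in the present range of $\lambda$. Since $J_1 = [(c-\lambda^2)^2, c^2]$, the new assumption $(c-\lambda^2)(1-\lambda) \leq c^2$ says exactly that the right endpoint of $J_1$ reaches the left endpoint of $J_2$; thus $J_1$ attaches to the chain and
$$[(c-\lambda^2)^2, 1] \;\subset\; f\bigl(K \cap [A,B],\, K \cap [A,B]\bigr) \;\subset\; K \cdot K.$$

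Finally, because $f_1(K) \cdot K = \lambda(K \cdot K) \subset K \cdot K$, iterating yields
$$K \cdot K \;\supset\; \{0\} \,\cup\, \bigcup_{n=0}^{\infty} \lambda^n \bigl[(c-\lambda^2)^2, 1\bigr],$$
and the right hand side fills $[0,1]$ as soon as $\lambda \geq (c-\lambda^2)^2$. The main obstacle is that this last inequality is not a stated hypothesis, so it must be deduced from the four listed ones. I would split on whether $\lambda \leq 1/3$ or $\lambda \geq 1/3$: in the first case the binding bound is $c \leq 2\lambda$ and the claim reduces to $\lambda(2-\lambda)^2 \leq 1$, an increasing function of $\lambda$ on $(0, 2/3)$ with value $25/27$ at $\lambda = 1/3$; in the second case the binding bound is $c < 1-\lambda$ and the claim reduces to $1-\lambda-\lambda^2 \leq \sqrt\lambda$, a decreasing function of $\lambda$ checked by an endpoint evaluation at $\lambda = 1/3$ and monotonicity on $[1/3, (3-\sqrt 5)/2]$. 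With this verification in hand the proof of Lemma \ref{lem3} is complete.
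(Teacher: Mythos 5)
Your proposal is correct and follows essentially the same route as the paper: the same set $\widetilde{K}=K\cap[c-\lambda^2,1]$, the same six product intervals, the new hypothesis $(c-\lambda^2)(1-\lambda)\leq c^2$ used to attach $J_1=[(c-\lambda^2)^2,c^2]$ to the rest of the chain, and the same final requirement $\lambda\geq(c-\lambda^2)^2$. The only (cosmetic) difference is your two-case verification of that last inequality, which the paper replaces by a one-line argument: since $c\leq 2\lambda$, it suffices to check $2\lambda\leq\sqrt{\lambda}+\lambda^{2}$, which holds precisely for $\lambda\leq\frac{3-\sqrt{5}}{2}$.
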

\begin{proof}
If $(c- \lambda^2)(1-\lambda) \leq   c^2$, then with a similar discussion as the proof of Lemma \ref{lem2}, we have the following inclusion 
$$[0,1]\supset K\cdot K\supset \cup_{n=0}^{\infty}\lambda^n[(c-\lambda^2)^2,1]\cup\{0\}=[0,1].$$
Here we need to assume $\lambda\geq (c-\lambda^2)^2$, that is, $c\leq \sqrt{\lambda}+\lambda^2. $ Since $c\leq 2\lambda$, it suffices to prove $2\lambda\leq  \sqrt{\lambda}+\lambda^2 $, which is a direct consequence of  $ 2-\sqrt{3}\leq \lambda\leq \dfrac{3-\sqrt{5}}{2}$. 
\end{proof}
\begin{Lemma}\label{key3}
Suppose $c$ and $\lambda$ satisfy the following inequatlies,
\begin{equation*}
\left\lbrace\begin{array}{cc}
2-\sqrt{3}\leq \lambda< \dfrac{3-\sqrt{5}}{2}\\
                (1-\lambda)^2\leq c \leq 2\lambda\\
               (c- \lambda^2)(1-\lambda) \geq   c^2\\
               c\geq \lambda^2+\dfrac{\lambda}{1-\lambda}
                \end{array}\right.
\end{equation*}
Then $c\geq \dfrac{1}{2}$, i.e. $c-\lambda\geq 1-c-\lambda.$
\end{Lemma}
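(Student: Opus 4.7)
The plan is to show that $c\geq 1/2$ follows already from conditions (2) and (4) of the hypothesis (namely $c\geq (1-\lambda)^2$ and $c\geq \lambda^2+\frac{\lambda}{1-\lambda}$), together with the range of $\lambda$; the third inequality $(c-\lambda^2)(1-\lambda)\geq c^2$ need not be invoked. The final reformulation $c-\lambda\geq 1-c-\lambda$ is just the algebraic equivalent of $c\geq 1/2$, so it suffices to prove this single inequality.

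First I would set $g(\lambda)=(1-\lambda)^2$ and $h(\lambda)=\lambda^2+\frac{\lambda}{1-\lambda}$, noting that $c\geq\max\{g(\lambda),h(\lambda)\}$ by the hypotheses. Then I would record two monotonicity facts on $(0,1)$: $g$ is strictly decreasing, while $h'(\lambda)=2\lambda+\frac{1}{(1-\lambda)^2}>0$, so $h$ is strictly increasing. Thus $\max\{g,h\}$ is minimized at the unique crossing point $g(\lambda)=h(\lambda)$, and any lower bound on $\max\{g,h\}$ over the entire relevant $\lambda$-interval is realized at that crossing point.

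The key computation is to locate the crossing point. I would guess (and then verify) that it occurs at $\lambda_0=1-\tfrac{1}{\sqrt 2}$. At this value, $g(\lambda_0)=(1/\sqrt 2)^2=1/2$; and since $\frac{\lambda_0}{1-\lambda_0}=\frac{(2-\sqrt 2)/2}{\sqrt 2/2}=\sqrt 2-1$ and $\lambda_0^2=\tfrac{3}{2}-\sqrt 2$, one gets $h(\lambda_0)=\tfrac32-\sqrt 2+\sqrt 2-1=1/2$ as well. A straightforward numerical check confirms that $2-\sqrt 3<\lambda_0<\tfrac{3-\sqrt 5}{2}$, so $\lambda_0$ lies inside the hypothesized range of $\lambda$.

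The conclusion is then immediate by splitting cases on whether $\lambda\leq\lambda_0$ or $\lambda\geq\lambda_0$: in the first case $g(\lambda)\geq g(\lambda_0)=1/2$, in the second $h(\lambda)\geq h(\lambda_0)=1/2$, and since $c$ dominates both, $c\geq 1/2$ in every case. The only nonroutine step is recognizing the critical value $\lambda_0=1-\tfrac{1}{\sqrt 2}$ at which the decreasing function $(1-\lambda)^2$ and the increasing function $\lambda^2+\lambda/(1-\lambda)$ simultaneously equal $1/2$; once this coincidence is noticed the rest is a one-line case split, which is the main obstacle only in the sense of spotting (rather than verifying) the correct exponent.
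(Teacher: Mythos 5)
Your proposal is correct, and it is a genuinely different -- and substantially stronger -- argument than the one in the paper. The paper does not actually prove this lemma analytically: its ``proof'' consists of plotting the region determined by the four constraints with a computer and reading off from the picture that $c\geq \tfrac12$. Your argument replaces this with a rigorous two-line verification: since $c\geq (1-\lambda)^2$ and $c\geq \lambda^2+\tfrac{\lambda}{1-\lambda}$, and since the first bound is strictly decreasing while the second is strictly increasing on $(0,1)$, it suffices to exhibit the crossing point $\lambda_0=1-\tfrac{1}{\sqrt2}$ where both equal $\tfrac12$; the case split $\lambda\leq\lambda_0$ versus $\lambda\geq\lambda_0$ then gives $c\geq\tfrac12$ unconditionally. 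I checked the arithmetic: $(1-\lambda_0)^2=\tfrac12$, $\lambda_0^2=\tfrac32-\sqrt2$, $\tfrac{\lambda_0}{1-\lambda_0}=\sqrt2-1$, so indeed both bounds equal $\tfrac12$ at $\lambda_0$, and $2-\sqrt3<\lambda_0<\tfrac{3-\sqrt5}{2}$ (though your argument does not even need $\lambda_0$ to lie in that window, since the monotonicity holds on all of $(0,1)$). Your observation that the constraint $(c-\lambda^2)(1-\lambda)\geq c^2$ and the range of $\lambda$ are not needed is also correct and clarifies what the lemma really depends on. In short, your approach buys an honest proof where the paper offers only numerical evidence; it would be a strict improvement to substitute it for the published argument.
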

\begin{proof}
The proof is due to the help of computer, see the fourth picture in Figure 1. We plot the blue region  which satisfies the conditions in lemma, and find that $c\geq \dfrac{1}{2}.$
\end{proof}
\begin{Lemma}\label{lem4}
Suppose $c$ and $\lambda$ satisfy the following inequalities,
\begin{equation*}
\left\lbrace\begin{array}{cc}
2-\sqrt{3}\leq \lambda< \dfrac{3-\sqrt{5}}{2}\\
                (1-\lambda)^2\leq c \leq 2\lambda\\
               (c- \lambda^2)(1-\lambda) \geq   c^2\\
               c\geq \lambda^2+\dfrac{\lambda}{1-\lambda}
                \end{array}\right.
\end{equation*}
Then $ K\cdot K=[0,1].$
\end{Lemma}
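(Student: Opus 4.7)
The plan is to apply Lemma \ref{key1} to the window $[A,B]=[c-\lambda,1]$ and read off $K\cdot K=[0,1]$ directly from the level-one picture. From the computer-assisted observation of the lemma immediately preceding this one (also labelled \ref{key3}), the hypotheses force $c\geq 1/2$, i.e., $c-\lambda\geq 1-c-\lambda$. Hence every basic interval contained in $[c-\lambda,1]$ has left endpoint at least $1-c-\lambda$, meeting the hypothesis $a\geq b\geq 1-c-\lambda$ of Lemma \ref{key2}; the remaining arithmetic inequalities in Lemma \ref{key2} follow from $(1-\lambda)^2\leq c\leq 2\lambda$ and the standing assumption $c<1-\lambda$ via the first Lemma \ref{key3}. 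Therefore Lemma \ref{key1} collapses the product computation to level one:
$$f\bigl(K\cap[c-\lambda,1],\,K\cap[c-\lambda,1]\bigr)=f(G_1,G_1),\qquad G_1=[c-\lambda,c]\cup[1-\lambda,1],$$
where $[0,\lambda]$ does not appear in $G_1$ because $(1-\lambda)^2\leq c$ and $\lambda<(3-\sqrt 5)/2$ together give $c>\lambda$.

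The next step is to expand $f(G_1,G_1)$ into its three constituent products and show it is a single interval. A direct calculation gives
$$f(G_1,G_1)=[(c-\lambda)^2,c^2]\cup[(c-\lambda)(1-\lambda),c]\cup[(1-\lambda)^2,1].$$
By the hypothesis $(1-\lambda)^2\leq c$ the rightmost two pieces merge. The key algebraic step is that the middle piece glues to the leftmost, i.e., $(c-\lambda)(1-\lambda)\leq c^2$. Rearranging, this is $c^2-c(1-\lambda)+\lambda(1-\lambda)\geq 0$, a quadratic in $c$ with discriminant $(1-\lambda)^2-4\lambda(1-\lambda)=(1-\lambda)(1-5\lambda)$. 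Since $\lambda\geq 2-\sqrt{3}>1/5$, this discriminant is non-positive, so the inequality holds for every admissible $c$, and $f(G_1,G_1)=[(c-\lambda)^2,1]$ is a single interval. The region-defining inequalities $(c-\lambda^2)(1-\lambda)\geq c^2$ and $c\geq\lambda^2+\lambda/(1-\lambda)$ play no role in the actual argument; they merely carve out the blue region not already handled by Lemmas \ref{lem2} and \ref{lem3}.

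The final step is the standard dilation closure. Because $1\in K$ and $f_1(K)=\lambda K\subset K$, we have $\lambda(K\cdot K)=K\cdot f_1(K)\subset K\cdot K$, and iterating gives $K\cdot K\supset\lambda^n[(c-\lambda)^2,1]$ for every $n\geq 0$. Since $c\leq 2\lambda$ yields $(c-\lambda)^2\leq\lambda^2\leq\lambda$, consecutive dilates overlap, and together with $\{0\}\in K\cdot K$ they exhaust $[0,1]$. The only substantive obstacle in this plan is the inequality $(c-\lambda)(1-\lambda)\leq c^2$; once recast as a quadratic in $c$, the bound $\lambda\geq 2-\sqrt{3}$ makes its discriminant non-positive and no further case analysis is required.
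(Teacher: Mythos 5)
Your proposal is correct and follows essentially the same route as the paper: restrict to the window $[c-\lambda,1]$ using the computer-assisted lemma to get $c-\lambda\geq 1-c-\lambda$, apply Lemmas \ref{key1} and \ref{key2} to collapse to $[(c-\lambda)^2,c^2]\cup[(c-\lambda)(1-\lambda),c]\cup[(1-\lambda)^2,1]$, glue via the negative discriminant of $c^2-c(1-\lambda)+\lambda(1-\lambda)$, and finish by dilation using $(c-\lambda)^2\leq\lambda$. The only caveat is your remark that the hypotheses $(c-\lambda^2)(1-\lambda)\geq c^2$ and $c\geq\lambda^2+\lambda/(1-\lambda)$ ``play no role'': they do enter, as inputs to the computer-assisted lemma that delivers $c\geq 1/2$, which is exactly what licenses the application of Lemma \ref{key2} on this window.
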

\begin{proof}
By Lemma \ref{key3},   it follows that $c-\lambda\geq 1-c-\lambda$. Therefore, we can utilize Lemmas \ref{key2} and \ref{key1} by taking $\widetilde{K}=([c-\lambda,c]\cup [1-\lambda,1])\cap K$.
Therefore, 
\begin{eqnarray*}
f(\widetilde{K},\widetilde{K})&=& f([c-\lambda,c]\cup [1-\lambda,1]),[c-\lambda,c]\cup [1-\lambda,1]))\\&=
&[(c-\lambda)^2, c^2]\cup [(c-\lambda)(1-\lambda),c]\cup [(1-\lambda)^2,1]
\end{eqnarray*}
The equation $c^2=(c-\lambda)(1-\lambda)$ has two roots, i.e.
$$c_1=\dfrac{-(\lambda-1)-\sqrt{(\lambda-1)^2-4(\lambda-\lambda^2)}}{2},c_2=\dfrac{-(\lambda-1)+\sqrt{(\lambda-1)^2-4(\lambda-\lambda^2)}}{2}.$$
Since $2-\sqrt{3}\leq \lambda< \dfrac{3-\sqrt{5}}{2}$, it follows that  $$(\lambda-1)^2-4(\lambda-\lambda^2)<0.$$
Therefore, $c^2>(c-\lambda)(1-\lambda)$. Subsequently, 
$$f(\widetilde{K},\widetilde{K})=[(c-\lambda)^2,1]$$
 Next, we prove that $\lambda\geq(c-\lambda)^2$ which is equivalent to $\sqrt{\lambda}+\lambda\geq c$. This is trivial as $\sqrt{\lambda}+\lambda\geq 2\lambda\geq c.$ 
 Therefore, 
$$[0,1]\supset K\cdot K\supset \cup_{n=0}^{\infty}\lambda^n[(c-\lambda)^2,1]\cup\{0\}=[0,1].$$
\end{proof}
\begin{proof}[\textbf{Proof of Theorem \ref{Main}}]
By  Remark \ref{Remark}, we only need to prove
the sufficiency of Theorem \ref{Main}.  If $c\geq  (1-\lambda)^2$ and $\dfrac{3-\sqrt{5}}{2}\leq \lambda\leq 1$, then by Lemma \ref{useful}, $K\cdot K=[0,1]$. If $2-\sqrt{3}\leq \lambda< \dfrac{3-\sqrt{5}}{2}$ and $c\geq  (1-\lambda)^2$, then by  Lemmas \ref{lem2}, \ref{lem3} and \ref{lem4}, $K\cdot K=[0,1]$. Note that  the  union of the associated regions of $(\lambda,c)$  satisfying the conditions in Lemmas \ref{useful}, \ref{lem2}, \ref{lem3} and \ref{lem4}   is the  brown, gray,  orange,  blue regions in the  pictures of Figure 1, which is exactly the purple region of  the first picture  of Figure 1.  
\end{proof}
\section{Examples}
\begin{Example}
Let  $K$ be the attractor of the following IFS, 
$$\left\{f_1(x)=\dfrac{x}{3}, f_2(x)=\dfrac{x}{3}+c-\dfrac{1}{3}, f_3(x)=\dfrac{x+2}{3}\right\},\dfrac{1}{3}\leq c<\dfrac{2}{3}.$$
If $c \in\left[\dfrac{4}{9}, \dfrac{2}{3}\right)$, then $K\cdot K=[0,1].$ Moreover, $c=\dfrac{4}{9}$ is sharp, i.e. for any $\dfrac{1}{3}\leq c<\dfrac{4}{9}$, $$K\cdot K\subsetneq[0,1].$$
\end{Example}
In this example $2-\sqrt{3}\leq \lambda=1/3<1$. Hence for  any $c\geq (1-\lambda)^2=\dfrac{4}{9}$, i.e. $c\in \left[\dfrac{4}{9},\dfrac{2}{3}\right)$, then $K\cdot K=[0,1].$ Moreover, $c=\dfrac{4}{9}$ is sharp. 
\begin{Example}
Let $K$ be the self-similar set of the following  IFS, $$\{f_1(x)=\lambda x, f_2(x)=\lambda x+\lambda-\lambda^n,f_3(x)=\lambda x+1-\lambda\},$$ where $0<\lambda<\beta, n\geq 2$, and $\beta\in(0,1)$ is the smallest real root of $x^n-3x+1=0$.
Then 
$K\cdot K=[0,1]$ if and only if $\alpha\leq \lambda<\beta$, where $\alpha\in(0,1)$ is the smallest  real root of $x^n+x^2-4x+1=0$.
\end{Example}
First, we prove the  following lemma.  
\setcounter{Proposition}{2}
\begin{Proposition}\label{compare}
Let $\alpha$ and $\beta$ be the smallest real roots of $x^n+x^2-4x+1=0$ and $x^n-3x+1=0$, respectively.  Then $\alpha<\beta$. 
\end{Proposition}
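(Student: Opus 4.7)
The plan is to exploit the trivial identity
\[
p(x) - q(x) = x^2 - x = x(x-1),
\]
where $p(x) = x^n + x^2 - 4x + 1$ and $q(x) = x^n - 3x + 1$. This identity yields $p(\beta) = \beta(\beta - 1)$ for free, so the whole problem reduces to pinning down the sign of $\beta$ precisely enough to determine the sign of $\beta(\beta - 1)$; once that is done, the intermediate value theorem will deliver a real root of $p$ strictly smaller than $\beta$, which forces $\alpha < \beta$.

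The argument splits by the parity of $n$, because the smallest real roots of $p$ and $q$ live on opposite sides of $[0,1]$ depending on parity. When $n$ is even, each of $x^n$, $x^2$, $-4x$, $-3x$ is nonnegative for $x \le 0$, so $p(x), q(x) \ge 1$ on $(-\infty, 0]$; combined with $p(1) = q(1) = -1$, this forces both $\alpha$ and $\beta$ to lie in $(0,1)$. Then $p(\beta) = \beta(\beta-1) < 0$ and $p(0) = 1 > 0$, so IVT produces a root of $p$ in $(0, \beta)$, giving $\alpha < \beta$.

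When $n$ is odd (so $n \ge 3$), both polynomials tend to $-\infty$ at $-\infty$, so their smallest real roots are negative; I would use a short monotonicity argument to pin $\beta$ down. Precisely, $q'(x) = nx^{n-1} - 3$ has $n-1$ even, so $q$ is strictly increasing on $(-\infty, -a]$ for $a = (3/n)^{1/(n-1)}$, and the critical-point relation $a^n = 3a/n$ yields
\[
q(-a) = -a^n + 3a + 1 = \frac{3(n-1)}{n}\,a + 1 > 0;
\]
since $q(-\infty) = -\infty$, there is a unique root in $(-\infty, -a)$, which must be $\beta$, and in particular $\beta < 0$. Consequently $p(\beta) = \beta(\beta - 1) = \beta^2 - \beta > 0$, while $p(x) \to -\infty$ as $x \to -\infty$, so IVT again produces a real root of $p$ strictly below $\beta$, and hence $\alpha < \beta$.

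The main obstacle is precisely this parity dichotomy: the identity $p - q = x(x-1)$ supplies a value of $p(\beta)$ of opposite sign in the two cases, so the IVT has to be set up on opposite sides of $\beta$. The only nontrivial step is confirming $\beta < 0$ in the odd case, which is handled by the brief monotonicity analysis above; after that, each case collapses to a one-line application of the intermediate value theorem.
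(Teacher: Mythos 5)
Your proof is correct, and it hinges on the same algebraic fact as the paper's --- the identity $(x^n+x^2-4x+1)-(x^n-3x+1)=x^2-x$ --- but it is organized quite differently. The paper tacitly reads ``smallest real root'' as ``the (unique) root in $(0,1)$'', which is the reading Example 3.2 actually requires (there $\alpha,\beta\in(0,1)$ is stipulated, since $\lambda\in(0,1)$); it then argues by contradiction: assuming $\beta\le\alpha$, it computes $q(\alpha)=\alpha-\alpha^2>0$ for your $q(x)=x^n-3x+1$, notes $q(1)=-1<0$, and produces a second root of $q$ in $(0,1)$, contradicting a uniqueness claim obtained from Rouch\'e's theorem. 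You instead evaluate $p(\beta)=\beta(\beta-1)$ and run the intermediate value theorem directly toward the appropriate endpoint; this is the mirror-image use of the same identity, but it is a direct argument that needs only the \emph{minimality} of $\alpha$, not uniqueness of roots, so you can dispense with Rouch\'e entirely. Your parity dichotomy is forced by taking ``smallest real root'' literally: for odd $n$ both polynomials have negative real roots, so the literal smallest roots are negative, and your monotonicity analysis there is correct but proves a statement about negative roots that the paper never uses. For the version the paper actually needs (roots in $(0,1)$), your even-$n$ branch works verbatim for every $n\ge 2$: $p(\beta)=\beta(\beta-1)<0$ together with $p(0)=1>0$ yields a root of $p$ in $(0,\beta)$. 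So your argument both covers the literal statement (which the paper's proof, strictly speaking, does not address for odd $n$) and, through its even-$n$ branch, gives a cleaner and more self-contained proof of the statement the paper relies on.
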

\begin{proof}
By  the Rouch\'{e} theorem and the intermediate value theorem, it follows that $\alpha$ and $\beta$ are the unique real roots of $x^n+x^2-4x+1=0$ and $x^n-3x+1=0$ in $(0,1)$, respectively.
Now, we prove  $\alpha<\beta$. 
If $\beta\leq \alpha$. Let $H(x)=x^n-3x+1$. Then 
$$H(\alpha)=\alpha^n-3\alpha+1=\alpha^n+\alpha^2-4\alpha+1+\alpha-\alpha^2=\alpha-\alpha^2\geq0.$$
$H(1)<0$. Therefore, by the intermediate value theorem, we conclude that there is another root of $H(x)$ in $(\alpha,1)$, which contradicts to the uniqueness of the root in $(0,1)$. 
\end{proof}
By Theorem \ref{Main}, $K\cdot K=[0,1]$ if and only if $c\geq (1-\lambda)^2$, where $c=2\lambda-\lambda^n$.  Therefore, $\lambda^n+\lambda^2-4\lambda+1\leq 0$. 
 The condition $\lambda^n-3\lambda+1>0$ is equivalent to $c<1-\lambda.$
 Therefore, $K\cdot K=[0,1]$ if and only if $\alpha\leq \lambda<\beta$. 
\section{Final remarks}
In this paper, we only consider the multiplication on the self-similar sets. It is natural to consider the division on the overlapping self-similar sets. Moreover,  we can prove the following result. 
\begin{Theorem}
Let $K$ be the attractor defined in Theorem \ref{Main}. 
Given $u\in [0,1]$, then there exist  some $\lambda, c$ and some $x_1,x_2,x_3,x_4, x_5, x_6\in K$ such that 
$$u=x_1\cdot x_2=x_3^2+x_4^2=\dfrac{x_5}{x_6}.$$
\end{Theorem}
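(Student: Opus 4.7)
The plan is to handle each of the three equalities in turn, allowing $(\lambda,c)$ to depend on $u$ and keeping it in the admissible region of Theorem \ref{Main} so that $K\cdot K=[0,1]$. The multiplication $u=x_{1}x_{2}$ is then immediate from Theorem \ref{Main}, since every $u\in[0,1]$ lies in $K\cdot K$.

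For the division $u=x_{5}/x_{6}$, I would exploit that $K$ contains many explicit points: the orbit $\{\lambda^{n}:n\geq 0\}$ of $1$ under $f_{1}$, the orbit $\{1-\lambda^{n}:n\geq 0\}$ of $0$ under $f_{3}$, and $f_{2}(0)=c-\lambda$. The plan is to choose $(\lambda,c)$ so that $u$ itself belongs to $K$, and then take $x_{5}=u$, $x_{6}=1$. For small $u$, choose $n$ with $u^{1/n}\in[2-\sqrt{3},1/2)$ and set $\lambda=u^{1/n}$ so that $\lambda^{n}=u$. For $u$ near $1$, use $\lambda=(1-u)^{1/n}$ so that $1-\lambda^{n}=u$. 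For residual $u$ that neither construction reaches, tune the free parameter $c$ by setting $c=\lambda+u$, which places $f_{2}(0)=u$ in $K$; one then verifies the constraints on $c$ in Theorem \ref{Main}.

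The sum of squares $u=x_{3}^{2}+x_{4}^{2}$ would be handled by the same strategy applied to $\sqrt{u}$: with the ansatz $x_{3}=\sqrt{u}$, $x_{4}=0$, we need $\sqrt{u}\in K$, which the same three-branch menu provides. An alternative ansatz $x_{3}=x_{4}=\sqrt{u/2}$ supplies extra flexibility, useful when $\sqrt{u}$ cannot be placed in $K$ but $\sqrt{u/2}$ can.

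The main obstacle is realizing all three equalities with a single $(\lambda,c)$ for each $u$. The simultaneous choice $\lambda=u^{1/(2m)}$ neatly produces $\lambda^{2m}=u$ (for division with $x_{6}=1$) and $\lambda^{m}=\sqrt{u}$ (for sum of squares with $x_{4}=0$) for some integer $m$ placing $\lambda$ in the admissible window. For residual $u$ where no such $m$ exists---for instance $u=1/4$, where $u^{1/(2m)}$ straddles the forbidden value $1/2$---one switches the ansatz for at least one equality: for $u=1/4$, taking $\lambda=1/(2\sqrt{2})$ with $c=\lambda+u$ yields $u=c-\lambda\in K$ for the division, while $2\lambda^{2}=u$ gives $x_{3}=x_{4}=\lambda\in K$ for the sum of squares. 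The required case analysis mirrors the subregion decomposition used in the proof of Theorem \ref{Main}.
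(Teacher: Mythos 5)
The paper itself contains no proof of this statement: it appears in the final remarks and the authors explicitly defer the argument to a future publication, so there is nothing in the text to compare your attempt against. Judged on its own terms, your proposal has a genuine gap in the division and sum-of-squares steps. Both rest on the claim that for every $u\in[0,1]$ one can choose an admissible pair $(\lambda,c)$ so that $u$ (respectively $\sqrt{u}$ or $\sqrt{u/2}$) lies in $K$, using only the menu $\{\lambda^{n}\}$, $\{1-\lambda^{n}\}$ and $f_{2}(0)=c-\lambda$. That claim fails on a whole interval of $u$. The admissibility constraints (Lemma~\ref{nec} together with $(1-\lambda)^{2}\le c$) force $2-\sqrt{3}\le\lambda<1/2$, $\lambda\le c\le 2\lambda$ and $(1-\lambda)^{2}\le c<1-\lambda$. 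Hence $\lambda^{n}<1/2$ for all $n\ge 1$; $c-\lambda\le\min(\lambda,1-2\lambda)\le 1/3$; and $1-\lambda^{n}=u$ requires $1-u=\lambda^{n}$, which is impossible when $1-u\in[1/4,\,2-\sqrt{3})$ because $n=1$ forces $\lambda<2-\sqrt{3}$ and $n\ge 2$ forces $\lambda\ge 1/2$. Consequently no $u\in(\sqrt{3}-1,\,3/4]$ is reached by any branch of your menu (and $u=1/2$ fails as well); even enlarging the menu by $f_{2}(1)=c<1-\lambda\le\sqrt{3}-1$ and $f_{3}f_{2}(1)=1-\lambda+\lambda c\ge 1-\lambda+\lambda^{2}>3/4$ does not close this window. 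The same window obstructs the sum-of-squares step whenever $\sqrt{u}$ or $\sqrt{u/2}$ lands in it. Finally, the ``required case analysis'' for realizing all three identities with a single $(\lambda,c)$ is asserted rather than carried out, and your own device $\lambda=u^{1/(2m)}$ only works when the interval $[\log u/\log(2-\sqrt{3}),\,\log u/\log(1/2))$ contains an even integer, i.e.\ only for quite small $u$.

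A more robust route --- and the one the paper's own machinery points to --- is to fix a single admissible pair $(\lambda,c)$, say with $\lambda\ge(3-\sqrt{5})/2$, and repeat the proof of Theorem~\ref{Main} for the other two operations. Lemma~\ref{key1} is already stated for an arbitrary continuous $F$, so what is needed is an analogue of Lemma~\ref{key2} for $F(x,y)=x/y$ and for $F(x,y)=x^{2}+y^{2}$ on basic intervals inside $[1-\lambda,1]$; granting these, one gets $K/K\supset\bigcup_{n\ge 0}\lambda^{n}[1-\lambda,\,1/(1-\lambda)]\supset(0,1]$ and an analogous covering for sums of squares, which yields all three representations of every $u\in[0,1]$ simultaneously, with no case analysis on $u$ at all. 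I would recommend reworking your argument along those lines rather than trying to patch the pointwise menu.
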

  We will publish these results elsewhere.
  \section*{Acknowledgements}
The work is supported by National Natural Science Foundation of China (Nos.11771226, 11701302,
11371329, 11471124, 11671147). The work is also supported by K.C. Wong Magna Fund in Ningbo University.


\end{document}